\renewcommand{\S}{\mathscr{S}}  % complex unit sphere
\newcommand{\CC}{{\mathbb C}}
\newcommand{\RR}{{\mathbb R}}
\newcommand{\NN}{{\mathbb{N}}}
 \DeclareMathOperator{\Prob}{\mathbb{P}}   %probability
 \DeclareMathOperator{\E}{\mathbb{E}}      %expectation
 \DeclareMathOperator{\cov}{cov}             %logarithmic integral
 \newcommand{\I}{1\!\!1}                   %Indicator function
 \newcommand{\dd}{{\mathrm{d}}}            %differential
 \newcommand{\ii}{{\mathrm{i}}}
\newcommand{\law}{\overset{\mbox{\rm \scriptsize law}}{=}}
\newcommand{\convlaw}{\overset{\mbox{\rm \scriptsize law}}{\longrightarrow}}
\newcommand{\mat}[4]
    {\left(\begin{array}{cc}
    #1  & #2  \\
    #3 &  #4
    \end{array}\right)}
\renewcommand{\Re}{{\mathfrak{Re}}}
\renewcommand{\Im}{{\mathfrak{Im}}}
\newtheorem{thm}{Theorem}[section]
\newtheorem{cor}[thm]{Corollary}
\newtheorem{lem}[thm]{Lemma}
\newtheorem{prop}[thm]{Proposition}
\theoremstyle{definition}
\theoremstyle{remark}
\newtheorem*{rem}{Remark}
\numberwithin{equation}{section}
\begin{document}
\title[The characteristic polynomial of a random unitary matrix]{The characteristic polynomial of a random unitary matrix: a probabilistic approach}

\author{P. Bourgade}
 \address{Laboratoire de Probabilit\'es et Mod\'eles Al\'eatoires \\
Universit\'e Pierre et Marie Curie, et C.N.R.S. UMR 7599 \\ 175,
rue du Chevaleret \\ F-75013 Paris, France}
 \email{bourgade@enst.fr}

\author{C.P. Hughes}
\address{Department of Mathematics,
 University of York,
 York,
 YO10 5DD,
 U.K.}
 \email{ch540@york.ac.uk}

\author{A. Nikeghbali}
 \address{Institut f\"ur Mathematik,
 Universit\"at Z\"urich, Winterthurerstrasse 190,
 CH-8057 Z\"urich,
 Switzerland}
 \email{ashkan.nikeghbali@math.unizh.ch}

\author{M. Yor}
 \address{Laboratoire de Probabilit\'es et Mod\'eles Al\'eatoires,
 Universit\'e Pierre et Marie Curie, et C.N.R.S. UMR 7599,
 175, rue du Chevaleret,
 F-75013 Paris,
 France}
\subjclass[2000]{15A52, 60F05, 60F15} \keywords{Decomposition of Haar Measure, Random Matrices, Characteristic Polynomials, Limit Theorems, Beta-Gamma algebra}

\begin{abstract}
In this paper, we propose a probabilistic approach to the study of
the characteristic polynomial of a random unitary matrix. We
recover the Mellin Fourier transform of such a random polynomial,
first obtained by Keating and Snaith in \cite{KeaSna}, using a
simple recursion formula, and from there we are able to obtain the
joint law of its radial and angular parts in the complex plane. In
particular, we show that the real and imaginary parts of the
logarithm of the characteristic polynomial of a random unitary
matrix can be represented in law as the sum of independent random
variables. From such representations, the celebrated limit theorem
obtained by Keating and Snaith in \cite{KeaSna} is now obtained
from the classical central limit theorems of Probability Theory,
as well as some new estimates for the rate of convergence and law
of the iterated logarithm type results.
\end{abstract}

\maketitle

\section{Introduction} \label{section:Intro}

In \cite{KeaSna}, Keating and Snaith argued that the Riemann zeta
function on the critical line could be modelled by the
characteristic polynomial of a random unitary matrix considered on
the unit circle. In their development of the model they showed,
via calculating the Mellin-Fourier transform, that the logarithm
of the characteristic polynomial weakly converges to a normal
distribution, analogous to Selberg's result on the normal
distribution of values of the logarithm of the Riemann zeta
function \cite{Selberg}.

The purpose of this paper is to prove an equality in law between
the characteristic polynomial and products of independent random
variables. Using this we rederive the limit theorem and
Mellin-Fourier transform of Keating and Snaith and prove some new
results about the speed of convergence.

Let $V_N$ denote a generic $N\times N$ random matrix drawn from
the unitary group $U(N)$ with the Haar measure $\mu_{U(N)}$. The
characteristic polynomial of $V_N$ is
\begin{align*}
Z(V_N,\theta)&:=\det(I_N-e^{-\ii\theta}V_N)\\
&=\prod_{j=1}^N \left(1-e^{\ii(\theta_n - \theta)}\right)
\end{align*}
where $e^{\ii\theta_1},\ldots,e^{\ii\theta_N}$ are the eigenvalues of $V_N$. Note that by
the rotation invariance of Haar measure, if $\theta$ is real then
$Z(V_N,\theta)\law Z(V_N,0)$. Therefore here and in the following
we may simply write $Z_N$ for $Z(V_N,\theta)$. Keating and Snaith
\cite{KeaSna} evaluated the Mellin-Fourier transform of $Z_N$.
Integrating against the Weyl density for Haar measure on $U(N)$,
and using certain Selberg integrals, they obtained, for all $t$
and $s$ with $\Re(t\pm s)>-1$,
\begin{equation}\label{mellinfourier}
\E\left(|Z_N|^t e^{\ii s \arg Z_N}\right) = \prod_{k=1}^N \frac{
\Gamma\left(k\right)\Gamma\left(k+t\right) } {
\Gamma\left(k+\frac{t+s}{2}\right)
\Gamma\left(k+\frac{t-s}{2}\right) }.
\end{equation}
In \cite{KeaSna} and in this article,  $\arg Z_N$ is defined as
the imaginary part of
\[
\log Z_N:=\sum_{n=1}^N\log (1-e^{\ii\theta_n})
\]
with $\Im \log(1-e^{\ii\theta_n})\in (-\pi/2,\pi/2]$. An
equivalent definition for $\log Z_N$ is
the value at point $x=1$ of the unique continuous function
$\log\det(I_N-x V_N)$ (on $[0,1]$) which is 0 at $x=0$.

By calculating the asymptotics of the cumulants of
\eqref{mellinfourier}, they were able to show that for any fixed
$s,t$,
\begin{equation*}
\E\left(|Z_N|^{t/\sqrt{(\log N)/2}} e^{\ii s \arg Z_N / \sqrt{(\log
N)/2}}\right) \to \exp\left(\tfrac12 t^2 - \tfrac12 s^2\right)
\end{equation*}
as $N\to\infty$, and from this deduce the central limit theorem
\begin{equation*}
\frac{\log Z_N}{\sqrt{\frac{1}{2}\log N}} \convlaw
\mathcal{N}_1+\ii \mathcal{N}_2,
\end{equation*}
where $\mathcal{N}_1$ and $\mathcal{N}_2$ are two independent
standard Gaussian random variables.

We will see in the two following sections how (1.1) may be simply
interpreted as an identity in law involving a certain product of
independent random variables. In particular, we shall show that
$\Re\log Z_{N}$ and $\Im\log Z_{N}$ can be written in law as sums
of independent random variables. Sums of independent random variables are very well known
and well studied objects in Probability Theory, and we can thus
have a better understanding of the distribution of the
characteristic polynomial with such a representation. We also
apply the classical limit theorems on such sums to obtain
asymptotic properties of $Z_{N}$ when $N\to\infty$. In particular,
we recover the convergence in law of $\log Z_{N}/\sqrt{\tfrac12
\log N}$ to a standard complex Gaussian law as a consequence of
the classical central limit theorem. We  also obtain some new
results about the rate of convergence and prove an iterated
logarithm law for the characteristic polynomial.

More precisely, the paper is organized as follows: in
Section~\ref{section:Haar}, we use a recursive construction for
the Haar measure on $U\left(N\right)$, to obtain our first
equality in law for the distribution of the characteristic
polynomial as a product of independent random variables, from
which we obtain a new proof of \eqref{mellinfourier} which does
not use Selberg's integrals or the Weyl density. Then in
Section~\ref{section:DecompIndRV} we use \eqref{mellinfourier} to
deduce the joint law of $(\Re\log Z_{N} , \Im \log Z_{N} )$,
writing each component as a sums of independent random variables.
Using these two representations for $Z_N$, in
Section~\ref{section:CLT} two new proofs of Keating-Snaith limit
theorem (the convergence in law of $\log Z_{N}/\sqrt{\tfrac12\log
N}$ to a standard complex Gaussian law) are provided. We also give
estimates on the rate of convergence in the central limit theorem.
In Section~\ref{section:ILL}  we see how strong limit theorems
such as the iterated logarithm can be deduced from our
representations.

In a companion paper to \cite{KeaSna}, Keating and Snaith
\cite{KeaSna2} studied the characteristic polynomial for classical
compact groups other than the unitary group, and in
Section~\ref{section:OrthogGroup}, we also give similar results
for $SO(2N)$ which plays a similar role to $U(N)$ for other
families of $L$-functions.

Since the publication of \cite{KeaSna}, there have been many
developments in understanding the distribution of the
characteristic polynomial. For example, other limit theorems and
large deviation results were derived for the characteristic
polynomial by Hughes, Keating and O'Connell in \cite{HKO}. The
distribution of the characteristic polynomial away from the point
$\theta=0$ in the other groups has been studied by Odgers
\cite{Odgers}. For more details about the connections between
random matrix theory and analytic number theory, see \cite{MezSna}
and the references therein, or the excellent survey article by
Royer \cite{Royer}.

\section{Decomposition of the Haar measure}\label{section:Haar}

In this section, we give an alternative proof of formula
\eqref{mellinfourier}, which does not necessitate the explicit
knowledge of the Weyl density formula and the values of some
Selberg integrals. This new demonstration relies on a recursive
presentation of the Haar measure $\mu_{U(N)}$.

\subsection{Decomposition of the Haar measure} \label{subsection:DecompHaar}

Let $V_N$ be distributed with Haar measure $\mu_{U(N)}$ on $U(N)$.
If $M\in U(N+1)$ is independent of $V_N$ a natural question to ask
is under which condition on the distribution of $M$, is the matrix
\begin{equation}\label{eq:matrix_decomp}
M\left(
\begin{array}{cc}
1&0\\
0&V_N
\end{array}
\right)
\end{equation}
distributed with the Haar measure in dimension $N+1$ ?

The solution to such a question allows one to recursively build up
a Haar distributed element of $U(N)$.

This question was partially answered by
Mezzadri relying on a general method due to Diaconis and
Shahshahani \cite{DiacShah}. Due to a factorization of the Ginibre
ensemble, Mezzadri showed that when $M$ is a suitable Householder
reflection then \eqref{eq:matrix_decomp} is distributed with the
Haar measure in dimension $N+1$. More precisely, he showed that if
$v$ is a unit vector chosen uniformly on the $(N+1)$-dimensional
unit complex-sphere
\[
\S_\CC^{N+1}:=\{(c_1,\dots,c_{N+1})\in\CC^{N+1} \ : \
|c_1|^2+\dots+|c_{N+1}|^2=1\} ,
\]
and if $\theta$ is the argument of the first coordinate of $v$,
and $u$ is the unit vector along the bisector of $e_1$ and $v$,
where $e_1=(1,0,\dots,0)$ is the unit vector for the first
coordinate, then one could take $M$ to be an element of $U(N+1)$
that can be written $-e^{-\ii\theta}(I_{N+1}-2u\overline{u}^T)$.

By the application $M\mapsto M_1$, it is clear that a necessary
condition for our question must be that $v$ must be distributed
according to the uniform measure on $\S_\CC^{N+1}$. The following
proposition states that this condition is also sufficient. It is a
slight generalization of Mezzadri's result, and its proof does not
require a decomposition of the Ginibre ensemble.

\begin{prop} \label{prop:HaarDecomp}
Let $M\in U(N+1)$ be such that its first column $M_1$ is uniformly
distributed on $\S_\CC^{N+1}$. If $V_N\in U(N)$ is chosen
independently of $M$ according to the Haar measure $\mu_{U(N)}$,
then the matrix
\[
V_{N+1} := M \mat{1}{0}{0}{V_N}
\]
is distributed with the Haar measure $\mu_{U(N+1)}$.

\end{prop}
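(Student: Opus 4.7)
The plan is to reduce the claim to a decomposition of Haar measure on $U(N+1)$ adapted to the block structure in the statement. I would first establish the following characterization: given any measurable section $s: \S_\CC^{N+1} \to U(N+1)$ of the first-column map (so $s(v)\,e_1 = v$), a random matrix $W \in U(N+1)$ is Haar-distributed if and only if
\[
W = s(V) \mat{1}{0}{0}{U_N},
\]
with $V$ uniform on $\S_\CC^{N+1}$, $U_N$ Haar-distributed on $U(N)$, and $V$ independent of $U_N$. Starting from Haar $W$, rotation-invariance of $\mu_{U(N+1)}$ makes $We_1$ uniform on the sphere, while $s(We_1)^{-1}W$ fixes $e_1$ and hence has the displayed block form with a measurable $U(N)$-component $U_N$; right-invariance of Haar under the subgroup $\{1\}\times U(N)$ then forces $U_N$ to be Haar-distributed and independent of $We_1$. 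The converse is automatic since the displayed representation uniquely determines the law of $W$.

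To apply this to $V_{N+1}$, I would consider the product $s(M_1)^{-1}M$ and observe that, since both $M$ and $s(M_1)$ have first column $M_1$, this product fixes $e_1$ and therefore equals $\mat{1}{0}{0}{A(M)}$ for some measurable $A(M) \in U(N)$. Substituting yields
\[
V_{N+1} = s(M_1) \mat{1}{0}{0}{A(M)\,V_N}.
\]
Since $M_1$ is uniform on $\S_\CC^{N+1}$ by hypothesis, it remains to prove that $A(M)\,V_N$ is Haar-distributed on $U(N)$ and independent of $M_1$; once this is in hand, the decomposition lemma applies and delivers the conclusion.

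This last step is the crux of the argument. The mechanism is the following: conditionally on the pair $(M_1, A(M))$, which is a measurable function of $M$ alone, $V_N$ remains Haar-distributed by the assumed independence $V_N$ and $M$; the left-invariance of $\mu_{U(N)}$ then shows that $A(M)\,V_N$ is itself Haar conditionally on $(M_1, A(M))$, hence unconditionally Haar and independent of $(M_1, A(M))$, a fortiori independent of $M_1$. The only subtleties I anticipate are arranging a genuinely measurable section $s$ and keeping the conditional-independence bookkeeping clean; conceptually, the Haar randomness of $V_N$ absorbs all of the structure of $M$ beyond its first column, which is why only the distribution of that first column matters in the hypothesis.
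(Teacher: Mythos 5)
Your proposal is correct, and it reaches the conclusion by a genuinely different route from the paper. The paper invokes uniqueness of Haar measure and reduces the statement to checking left-invariance: it shows $U V_{N+1} \law V_{N+1}$ for fixed $U$ by observing that $UM$ again has uniformly distributed first column, conditioning on that column being a fixed $v$, mapping $v$ to $e_1$ by a fixed unitary, and letting the Haar-distributed $V_N$ absorb the residual $U(N)$-factor. You instead promote the block factorization itself to a characterization of $\mu_{U(N+1)}$: fixing a measurable section $s$ of the first-column map, Haar on $U(N+1)$ is exactly the pushforward of (uniform on $\S_\CC^{N+1}$) $\times$ (Haar on $U(N)$) under $(v, u) \mapsto s(v)\,\mathrm{diag}(1,u)$. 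Once this lemma is in place, writing $V_{N+1} = s(M_1)\,\mathrm{diag}(1, A(M)V_N)$ and showing that $A(M)V_N$ is Haar and independent of $M_1$ — by conditioning on $M$ and using left-invariance of $\mu_{U(N)}$ — is the same "absorption" step that drives the paper's argument. The trade-off: the paper's direct left-invariance check is shorter and avoids needing to produce a measurable section; your version costs a bit more setup (existence of $s$, the Fubini-type bookkeeping to get $U_N$ Haar and independent of $We_1$ from right-invariance by $\{1\}\times U(N)$) but yields a reusable structural lemma that makes the proposition, and the recursion built on it later in the paper, essentially immediate.
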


\begin{proof}
Due to the uniqueness property of the Haar measure, we only need
to show that for a fixed $U\in U(N+1)$
\[
U M \mat{1}{0}{0}{V_N} \law M \mat{1}{0}{0}{V_N}.
\]

In the following, a matrix $A$ will often be written $(A_1 \|
\tilde{A})$, where $A_1$ is its first column. As $U\in U(N+1)$,
$(UM)_1=UM_1$ is distributed uniformly on the complex unit sphere
$\S_\CC^{N+1}$, so we can write $UM=(P_1 \| \tilde{P})$, with
$P_1$ uniformly distributed on $\S_\CC^{N+1}$ and $\tilde{P}$
having a distribution on the orthogonal hyperplane of $P_1$. We
then need to show that
\[
(P_1\|\tilde{P}) \mat{1}{0}{0}{V_N} \law (M_1\|\tilde{M})
\mat{1}{0}{0}{V_N},
\]
where all matrices are still independent. As $M_1$ and $P_1$ are
identically distributed, by conditioning on $M_1=P_1=v$ (here $v$ is
any fixed element of $\S_\CC^{N+1}$) it is
sufficient to show that
\[
(v\|P') \mat{1}{0}{0}{V_N} \law (v\|M') \mat{1}{0}{0}{V_N},
\]
where $M'$ (resp $P'$) is distributed like $\tilde{M}$ (resp
$\tilde{P}$) conditionally to $M_1=v$ (resp $P_1=v$). Let $A$ be any
element of $U(N+1)$ such that $A(v)=(1,0,\dots,0)$. Since $A$ is
invertible, we just need to show that
\[
A(v \| P') \mat{1}{0}{0}{V_N} \law A(v \| M') \mat{1}{0}{0}{V_N},
\]
that is to say
\[
P''V_N \law M''V_N,
\]
where $P''$ and $M''$ are distributed on $U(N)$ independently of
$V_N$. By independence and conditioning on $P''$ (resp $M''$), we
get $P''V_N \law V_N$ (resp $M''V_N \law V_N$) by definition of
the Haar measure $\mu_{U\left(N\right)}$. This gives the desired
result.
\end{proof}

The result of this proposition is very natural. It states, roughly
speaking, that in order to choose uniformly an element of $U(N+1)$
(that is to say an orthogonal unitary basis) one just needs to
choose the first element uniformly on the sphere and then an
element of $U(N)$ in the hyperplane orthogonal to the first
element, uniformly.

\subsection{Recovering the Mellin Fourier transform} \label{subsection:RecoverMellinFourier}

The decomposition of the Haar measure presented in the previous
paragraph gives another proof for equation \eqref{mellinfourier}.
In reality, the following Proposition~\ref{prop:Decomp1} gives
much more, as we get a representation of $Z(V_N)$ as a product of
$N$ simple independent random variables.

\begin{prop} \label{prop:Decomp1}
Let $V_N\in U(N)$ be distributed with the
Haar measure $\mu_{U(N)}$. Then for all $\theta\in\RR$
$$
\det (I_N-e^{i\theta}V_N)\law \prod_{k=1}^N
\left(1+e^{\ii\theta_k}\sqrt{\beta_{1,k-1}}\right),
$$
with $\theta_1,\dots,\theta_n,\beta_{1,0},\dots,\beta_{1,n-1}$
independent random variables, the $\theta_k$'s uniformly
distributed on $[0,2\pi]$ and the $\beta_{1,j}$'s ($0\leq j\leq
N-1$) being beta distributed with parameters 1 and $j$ (by
convention, $\beta_{1,0}$ is the Dirac distribution on 1).
\end{prop}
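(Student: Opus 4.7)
The plan is to prove the identity by induction on $N$, using the recursive construction of Haar measure provided by Proposition~\ref{prop:HaarDecomp}. Since $\det(I_N - e^{\ii\theta} V_N) \law \det(I_N - V_N)$ for any fixed $\theta \in \RR$ by rotation invariance of $\mu_{U(N)}$, it is enough to treat $\theta = 0$. The base case $N = 1$ is immediate: $V_1 = e^{\ii\alpha_1}$ with $\alpha_1$ uniform on $[0,2\pi]$, so $\det(I_1 - V_1) = 1 - e^{\ii\alpha_1} \law 1 + e^{\ii\theta_1} = 1 + e^{\ii\theta_1}\sqrt{\beta_{1,0}}$ by the Dirac convention $\beta_{1,0} \equiv 1$.

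For the inductive step, realize $V_{N+1} = M \mat{1}{0}{0}{V_N}$ as in Proposition~\ref{prop:HaarDecomp}, and block-decompose $M = \mat{\mu_0}{r^*}{\mu}{A}$ with $\mu_0 \in \CC$, $r, \mu \in \CC^N$ and $A \in \CC^{N\times N}$. The Schur complement formula, applicable since $\mu_0 \neq 1$ a.s., yields
$$\det(I_{N+1} - V_{N+1}) = (1 - \mu_0)\,\det\!\left(I_N - W V_N\right), \qquad W := A + \frac{\mu r^*}{1 - \mu_0}.$$
The crux is to check that $W$ is unitary. From $MM^* = M^*M = I_{N+1}$ one extracts $\|r\|^2 = \|\mu\|^2 = 1 - |\mu_0|^2$, $Ar = -\bar\mu_0\mu$, $r^* A^* = -\mu_0\mu^*$, and $AA^* = I_N - \mu\mu^*$. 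Substituting these into the expansion of $W W^*$ gives $W W^* = I_N - \mu\mu^* + c\,\mu\mu^*$ with a scalar $c$ which, after putting everything over the common denominator $|1 - \mu_0|^2$, simplifies via $(1-\mu_0)(1-\bar\mu_0) = |1-\mu_0|^2$ to $c = 1$. Hence $W \in U(N)$.

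Since $W$ is a measurable function of $M$ alone and $V_N$ is Haar-distributed independently of $M$, the left-invariance of Haar measure gives $W V_N \law V_N$ conditionally on $W$. Thus $\det(I_N - W V_N)$ has the law of $\det(I_N - V_N)$ and is independent of $\mu_0$; by the inductive hypothesis it is distributed as $\prod_{k=1}^N (1 + e^{\ii\theta_k}\sqrt{\beta_{1,k-1}})$. Finally, representing $(\mu_0,\mu_1,\ldots,\mu_N) = (g_0,g_1,\ldots,g_N)/\|g\|$ with the $g_j$ i.i.d.\ standard complex Gaussians shows that $|\mu_0|^2 \law \beta_{1,N}$ and that $\arg\mu_0$ is uniform on $[0,2\pi]$ independent of $|\mu_0|$, whence $1 - \mu_0 \law 1 + e^{\ii\theta_{N+1}}\sqrt{\beta_{1,N}}$ with $\theta_{N+1}$ uniform and independent. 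Combining yields the desired product for $N+1$. The main obstacle is the unitarity verification for $W$, which is not obvious a priori but reduces to a clean block-matrix computation using the relations imposed by $MM^* = M^* M = I_{N+1}$; once this is in hand, the conclusion follows by Haar invariance and bookkeeping.
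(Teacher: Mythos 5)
Your proof is correct, and it takes a genuinely different route from the one in the paper, although both rest on Proposition~\ref{prop:HaarDecomp} and the same final identification of the law of the first coordinate of a uniform point on the sphere. The paper picks a \emph{specific} $M$ (the reflection exchanging $e_1$ and the random unit vector $M_1$), and then exploits the explicit column structure $M=(e_1+v\|e_2+\lambda_2 v\|\dots\|e_N+\lambda_N v)$ together with multilinearity of the determinant: adding suitable multiples of the first column to the others kills the $\lambda_j v$ terms, producing a block triangular matrix and directly yielding $\det(I_N-V_N)\law(1-M_{11})\det(I_{N-1}-V_{N-1})$ with no extra unitary factor to account for. You instead keep $M$ arbitrary, extract the factor $(1-\mu_0)$ through a Schur complement, and then must prove the nontrivial algebraic fact that the resulting Schur complement $W=A+\mu r^*/(1-\mu_0)$ is itself unitary; your verification of $WW^*=I_N$ from the relations forced by $MM^*=M^*M=I_{N+1}$ is correct. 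What the paper's approach buys is a determinant calculation with no auxiliary lemma; what yours buys is generality (any $M$ whose first column is uniform on $\S_\CC^{N+1}$ works), and it isolates a clean structural fact about unitary deflation that may be of independent interest.

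One small imprecision worth fixing: you condition only on $W$ to conclude that $\det(I_N-WV_N)$ is independent of $\mu_0$, but $\mu_0$ is not $\sigma(W)$-measurable in general, so that conditioning does not directly give independence from $\mu_0$. The argument becomes airtight if you condition on all of $M$: given $M$, $W$ is a fixed unitary matrix and $WV_N\law V_N$ by left invariance, so the conditional law of $\det(I_N-WV_N)$ given $M$ is a fixed (non-random) law, whence $\det(I_N-WV_N)$ is independent of $M$, hence in particular of $\mu_0=M_{11}$.
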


\begin{proof}
As previously mentioned, it suffices to consider the
case $\theta=0$.

Note now that in Proposition~\ref{prop:HaarDecomp}, we can choose
any matrix $M\in U(N)$ with $M_1$ uniformly distributed on the
complex sphere $\S_\CC^{N}$. Let us choose the simplest suitable
transformation $M$ : the reflection with respect to the median
hyperplane of $e_1$ and $M_1$, where $M_1$ is chosen uniformly on
$\S_\CC^{N}$. Let the vector $v$ be $M_1-e_1$. Therefore there
exists $(\lambda_2,\dots,\lambda_{N})\in\CC^{N-1}$ such that
$$M=\left(e_1+v\| e_2+\lambda_2 v\|\dots\| e_{N}+\lambda_{N} v\right).$$
So, with Proposition~\ref{prop:HaarDecomp}, one can write
\begin{align*}
\det(I_N-V_N) & \law
\det\left[I_N-M\left(\begin{array}{cc}1&0\\0&V_{N-1}\end{array}\right)\right]
\\
&=
\det\left[\left(\begin{array}{cc}1&0\\0&\overline{V}^T_{N-1}\end{array}\right)-M\right]
\det\left(\begin{array}{cc}1&0\\0&V_{N-1}\end{array}\right).
\end{align*}
If we call $(u_1\|\dots\|u_{N-1}):=\overline{V}^T_{N-1}$ then using
the multi-linearity of the determinant we get
\begin{align*}
\det&\left[\left(\begin{array}{cc}1&0\\0&\overline{V}^T_{N-1}\end{array}\right)-M\right]
\\
&= \det\left(
-v\|\left(\begin{array}{c}0\\u_1\end{array}\right)-e_2-\lambda_2
v\|\dots\|\left(\begin{array}{c}0\\u_{N-1}\end{array}\right)-e_{N}-\lambda_{N}
v
\right)\\
&= \det\left(
-v\|\left(\begin{array}{c}0\\u_1\end{array}\right)-e_2\|\dots\|\left(\begin{array}{c}0\\u_{N-1}\end{array}\right)-e_{N}
\right)\\
&= \det\left(
\begin{array}{c|c}
-v_1&0\\
\dots&\overline{V}^T_{N-1}-I_{N-1}
\end{array}
\right)\\
&=-v_1\det\left(\overline{V}^T_{N-1}-I_{N-1}\right).
\end{align*}
Finally,
$$\det(I_N-V_N)\law -v_1 \det(I_{N-1}-V_{N-1}),$$
with $-v_1=1-M_{11}$ and $V_{N-1}$ independent. Therefore, to
prove Proposition~\ref{prop:Decomp1}, we only need to show that
$M_{11}\law e^{\ii\theta_N}\sqrt{\beta_{1,N-1}}$. This is
straightforward because, since $M_1$ is a random vector chosen
uniformly on $\S_\CC^N$,  we know that
$$M_{11}\law\frac{x_1+\ii y_1}{\sqrt{(x_1^2+y_1^2)+\dots+(x_N^2+y_N^2)}}\law e^{\ii\theta_N}\sqrt{\beta_{1,N-1}},$$
with the $x_i$'s and $y_i$'s all independent standard normal
variables, $\theta_N$ and $\beta_{1,N-1}$ as stated in
Proposition~\ref{prop:Decomp1}.
\end{proof}

To end the proof of \eqref{mellinfourier}, we now only need the
following lemma.

\begin{lem} \label{lem:MellinFourier}
Let $X:=1+e^{\ii\theta}\sqrt{\beta}$, where $\theta$ has uniform
distribution on $[0,2\pi]$ and, independently $\beta$ has a beta
law with parameters 1 and $N-1$. Then, for all $t$ and $s$ with
$\Re(t\pm s)>-1$
\begin{equation*}
\E\left(|X|^t e^{\ii s \arg X}\right) = \frac{ \Gamma\left(N\right)
\Gamma\left(N+t\right) }{ \Gamma\left(N+\frac{t+s}{2}\right)
\Gamma\left(N+\frac{t-s}{2}\right) }.
\end{equation*}
\end{lem}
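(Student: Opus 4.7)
The plan is to set $a = (t+s)/2$ and $b = (t-s)/2$, so that $|X|^t e^{\ii s \arg X} = X^a \bar X^b$ on the principal branch. This branch is available because $\Re(X) \ge 1 - \sqrt{\beta} > 0$ almost surely (for $N \ge 2$). Setting $Y := e^{\ii\theta}\sqrt{\beta}$, one has $|Y| < 1$ a.s., so a double binomial expansion gives
\[
(1+Y)^a (1+\bar Y)^b = \sum_{j,k \ge 0} \binom{a}{j}\binom{b}{k} e^{\ii(j-k)\theta}\,\beta^{(j+k)/2}.
\]
Taking expectation and using the independence of $\theta$ and $\beta$ together with $\E(e^{\ii(j-k)\theta}) = \delta_{jk}$, all off-diagonal terms vanish and one is left with
\[
\E\bigl(|X|^t e^{\ii s \arg X}\bigr) = \sum_{j \ge 0} \binom{a}{j}\binom{b}{j} \E(\beta^j).
\]

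The moment $\E(\beta^j) = \Gamma(N)\Gamma(j+1)/\Gamma(N+j)$ is a one-line beta integral. Rewriting $\binom{c}{j} = (-1)^j (-c)_j/j!$ and using $\Gamma(N)/\Gamma(N+j) = 1/(N)_j$ converts the sum into the Gauss hypergeometric series
\[
{}_2F_1(-a, -b; N; 1) = \sum_{j \ge 0} \frac{(-a)_j (-b)_j}{(N)_j\, j!},
\]
which by Gauss's summation theorem (applicable since $\Re(N + a + b) = \Re(N+t) > 0$, a consequence of $\Re(t \pm s) > -1$) equals
\[
\frac{\Gamma(N)\Gamma(N+t)}{\Gamma\bigl(N+\tfrac{t+s}{2}\bigr)\Gamma\bigl(N+\tfrac{t-s}{2}\bigr)},
\]
exactly as claimed.

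Two subsidiary points require checking. First, to justify the interchange of sum and expectation one bounds the series of absolute values: using $|\binom{c}{j}| \sim C\,j^{-\Re(c)-1}$ and $\E(\beta^j) \sim C\,j^{1-N}$, the general term has magnitude $j^{-\Re(t)-N-1}$, which is summable under the stated hypothesis. Second, the degenerate case $N=1$ (where $\beta \equiv 1$ so $|Y|=1$ a.s. and the binomial series fails to converge) is not covered by this argument; there the identity reduces, after the substitution $\phi = \theta/2$, to the classical integral $\int_{-\pi/2}^{\pi/2} (2\cos\phi)^t e^{\ii s \phi}\,\dd\phi/\pi$, whose value is a standard consequence of the beta function. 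The main conceptual step is the recognition of the sum as a Gauss hypergeometric series at unit argument; after that, Gauss's theorem does all the work, and the remainder is routine bookkeeping.
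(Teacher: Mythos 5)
Your proof follows the same route as the paper's: reduce $|X|^t e^{\ii s\arg X}$ to $X^a\bar X^b$, binomially expand, keep the diagonal terms, compute the beta moments, and invoke Gauss's summation theorem for ${}_2F_1$ at $z=1$. You add a few welcome refinements the paper leaves implicit --- the explicit justification that $\Re(X)>0$ pins down the principal branch, the quantitative tail bound for absolute convergence, and the observation that the degenerate case $N=1$ (where $\beta\equiv 1$ and $|Y|=1$, so the binomial series diverges) must be handled separately by the classical beta integral --- but the core argument is identical.
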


\begin{proof}
First, note that
\begin{align*}
\E\left(|X|^t e^{\ii s \arg X}\right) &=
\E\left(X^{(t+s)/2}\overline{X}^{(t-s)/2}\right)\\
&=
\E\left(\left(1+e^{\ii\theta}\sqrt{\beta}\right)^a\left(1+e^{-\ii\theta}\sqrt{\beta}\right)^b\right),
\end{align*}
with $a=(t+s)/2$ and $b=(t-s)/2$. Recall that if $|x|<1$ and $u\in
\RR$ then
$$(1+x)^u=\sum_{k=0}^\infty\frac{u(u-1)\dots(u-k+1)}{k!} x^k=\sum_{k=0}^\infty\frac{(-1)^k(-u)_k}{k!} x^k,$$
where $(y)_k=y(y+1)\dots(y+k-1)$ is the Pochhammer symbol. As
$|e^{\ii\theta}\sqrt{\beta}|<1$ a.s., we get
\begin{multline*}
\E\left[|X|^t e^{\ii s \arg X}\right] \\
= \E\left[ \left(
\sum_{k=0}^\infty\frac{(-1)^k(-a)_k}{k!}
\beta^{\frac{k}{2}}e^{\ii k\theta} \right) \left(
\sum_{\ell =0}^\infty\frac{(-1)^\ell (-b)_\ell }{\ell !}
\beta^{\frac{\ell }{2}}e^{-\ii\ell \theta} \right) \right].
\end{multline*}
After an expansion of this double sum (it is absolutely convergent
because $\Re(t\pm  s)>-1$), all terms with $k\neq \ell $ will give
an expectation equal to 0, because we integrate with respect to
the uniform probability measure along the unit circle. So we get
$$
\E\left[|X|^t e^{\ii s \arg X}\right]= \E \left[
\sum_{k=0}^\infty\frac{(-a)_k (-b)_k}{(k!)^2} \beta^k \right].
$$
As $\beta$ is a beta variable with parameters 1 and $N-1$, we have
$$\E\left[\beta^k\right]=\frac{\Gamma(1+k)\Gamma(N)}{\Gamma(1)\Gamma(N+k)}=\frac{k!}{(N)_k},$$
hence
$$
\E\left[|X|^t e^{\ii s \arg X}\right]= \sum_{k=0}^\infty\frac{(-a)_k
(-b)_k}{k! (N)_k}.
$$
Note that this series is equal to the value at $z=1$ of the
hypergeometric function $H(-a,-b,N;z)$. This value is well known
(see, for example, \cite{AAR}) and yields:
$$
\E\left[|X|^t e^{\ii s \arg
X}\right]=\frac{\Gamma(N)\Gamma(N+a+b)}{\Gamma(N+a)\Gamma(N+b)} .
$$
This gives the desired result.
\end{proof}

{\it Comments about Selberg integrals.} To prove
\eqref{mellinfourier} Keating and Snaith \cite{KeaSna}, relying on
Weyl's integration formula, used the result by Selberg
\begin{multline}\label{eq:SelbergInt}
J(a,b,\alpha,\beta,\gamma,N):=\\
\idotsint_{\RR^N} \prod_{1\leq j<\ell\leq N}\left|
x_j-x_\ell\right|^{2\gamma}
\prod_{j=1}^N(a+ i x_j)^{-\alpha}(b- i x_j)^{-\beta}\dd x_j \\
= \frac{(2\pi)^N}{(a+b)^{(\alpha+\beta)N-\gamma N(N-1)-N}}
\prod_{j=0}^{N-1} \frac{ \Gamma(1+\gamma+j\gamma)
\Gamma(\alpha+\beta_(N+j-1)\gamma-1) }{ \Gamma(1+\gamma)
\Gamma(\alpha-j\gamma) \Gamma(\beta-j\gamma) },
\end{multline}
in the specific case $a=b=1$ and $\gamma=1$. Thus our
probabilistic proof for \eqref{mellinfourier} also gives an
alternative proof of \eqref{eq:SelbergInt} for these specific
values of the parameters. Moreover, as we will see in
Section~\ref{section:OrthogGroup}, a similar result as
Proposition~\ref{prop:Decomp1} still holds for the orthogonal
group. As a consequence Selberg's integral formula admits a
probabilistic proof for $a=b$ and $\gamma=1/2$ and $1$ (however
this method relies on Weyl's integration formula, which is
essentially analytical).

\subsection{Decomposition of the characteristic polynomial off the unit circle.}
\label{subsection:DecompCharPoly}

Proposition~\ref{prop:Decomp1} can be extended to the law of the
characteristic polynomial of a random unitary matrix off the unit
circle where we replace $e^{i\theta}$ by a fixed $x$. Once more,
due to the rotation invariance of the unitary group, we may take
$x$ to be real.

\begin{prop} \label{prop:DecompOffCircle}
Let $x\in\RR$, $V_{N-1}$ distributed with the Haar measure $\mu_{U(N-1)}$,
$M_1$ uniformly chosen on $\S_\CC^{N}$, independently of $U_{N-1}$. We write $M_{11}$ for
the first coordinate of $M_1$, and $\tilde{M_1}$ for the vector with coordinates $M_{12},\dots,M_{1N}$.

Then, if $V_N$ is distributed with the Haar measure $\mu_{U(N)}$,
\begin{multline}\label{eq:DecompCharPolyOff}
\det(I_N-x V_N) \law (1-x M_{11})\ \det(I_{N-1}-xV_{N-1}) \\
+\frac{x(1-x)}{1-\overline{M_{11}}} \overline{\tilde{M}_1}^T
(\overline{V_{N-1}}^T-x I_{N-1})^{-1}\tilde{M}_1\ \det(I_{N-1}-x
V_{N-1}).
\end{multline}
\end{prop}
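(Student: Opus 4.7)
The plan is to adapt the proof of Proposition~\ref{prop:Decomp1} while keeping track of the parameter $x$. As in that proof, I invoke Proposition~\ref{prop:HaarDecomp} to write $V_N \law M \cdot \mathrm{diag}(1, V_{N-1})$, where $M \in U(N)$ has first column $M_1$ uniform on $\S_\CC^N$ (independently of $V_{N-1}$), and I make the explicit choice $M = (e_1 + v \| e_2 + \lambda_2 v \| \dots \| e_N + \lambda_N v)$ with $v = M_1 - e_1$. Setting $D = \mathrm{diag}(1, V_{N-1})$, the identity $I_N - xMD = (D^{-1} - xM)D$ combined with $D^{-1} = \mathrm{diag}(1, \overline{V_{N-1}}^T)$ yields $\det(I_N - xV_N) \law \det(V_{N-1})\, \det(D^{-1} - xM)$. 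Inside this second determinant, the first column is $(1-x)e_1 - xv$, while the $j$-th column (for $j \ge 2$) can be written $(0, u_{j-1})^T - xe_j - x\lambda_j v$, where $u_{j-1}$ denotes the $(j-1)$-th column of $\overline{V_{N-1}}^T$.

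Next I would eliminate the $v$ contributions from columns $j \ge 2$ by subtracting $\lambda_j$ times the first column from column $j$. This cancels the $-x\lambda_j v$ term but leaves behind the contribution $-\lambda_j(1-x)e_1$ in the top row; it is precisely this factor $(1-x)$, which was invisible at $x=1$, that will produce the new second summand of \eqref{eq:DecompCharPolyOff}. The transformed matrix then has the block form
\[
\begin{pmatrix} 1 - xM_{11} & -(1-x)(\lambda_2, \dots, \lambda_N) \\ -x\tilde{M}_1 & \overline{V_{N-1}}^T - xI_{N-1} \end{pmatrix},
\]
to which I apply the Schur complement formula. Combining the resulting $\det(\overline{V_{N-1}}^T - xI_{N-1})$ with the leading $\det(V_{N-1})$ and invoking the unitarity relation $\overline{V_{N-1}}^T V_{N-1} = I_{N-1}$ produces the desired factor $\det(I_{N-1} - xV_{N-1})$.

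The final step, and the main computational point, is to identify the coefficients $\lambda_j$. Unitarity of $M$ imposes orthogonality between columns $1$ and $j$, which yields $\lambda_j(v_1 + \|v\|^2) = -\overline{v_j}$; the normalization $\|M_1\|^2 = 1$ forces $\|v\|^2 = 2 - 2\Re(M_{11})$; and combining these two relations produces $v_1 + \|v\|^2 = 1 - \overline{M_{11}}$, hence $(\lambda_2, \dots, \lambda_N) = \overline{\tilde{M}_1}^T/(\overline{M_{11}} - 1)$ as a row vector. Substituting this back into the Schur expansion converts the off-diagonal contribution $-x(1-x)(\lambda_2,\dots,\lambda_N)(\overline{V_{N-1}}^T - xI_{N-1})^{-1}\tilde{M}_1$ into $\frac{x(1-x)}{1-\overline{M_{11}}}\,\overline{\tilde{M}_1}^T(\overline{V_{N-1}}^T - xI_{N-1})^{-1}\tilde{M}_1$, the $-$ sign being absorbed by the factor $\overline{M_{11}} - 1$. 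This gives exactly the right-hand side of \eqref{eq:DecompCharPolyOff}, and I do not anticipate any conceptual obstacle beyond careful bookkeeping of signs and transposes.
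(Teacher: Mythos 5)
Your proof is correct and follows essentially the same route as the paper: choosing $M$ to be the Householder reflection carrying $e_1$ to $M_1$ (the paper writes $\lambda_j = -\overline{M_{1j}}/(1-\overline{M_{11}})$ explicitly, which matches your derivation from the orthogonality constraint), performing column operations to isolate $\overline{V_{N-1}}^T - xI_{N-1}$ in the lower-right block, and then extracting the Schur complement. The paper implements the Schur complement step by post-multiplying by a cleverly modified $\mathrm{diag}(1,V_{N-1})$ so the product is block triangular, but this is computationally identical to the Schur formula you invoke, and the final recombination with $\det V_{N-1}$ via unitarity is the same.
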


\begin{proof}
The idea is the same as for Proposition~\ref{prop:Decomp1}, where
we use Proposition~\ref{prop:HaarDecomp} with a specific choice of
$M$, the reflection with respect to the hyperplane median to
$e_1:=(1,0,\dots,0)$ and $M_1$, a vector of $\S_\CC^{N}$ chosen
uniformly. If $k:=M_1-e_1$ we can write more precisely
$$
M=\left(M_1,e_2+\frac{-\overline{M}_{12}}{1-\overline{M}_{11}} k,\dots,e_N+\frac{-\overline{M}_{1N}}{1-\overline{M}_{11}} k\right).
$$
Thus, using multi-linearity of the determinant, due to
Proposition~\ref{prop:HaarDecomp} we get after some
straightforward calculation
\begin{align*}
\det(I_N-x V_N)&\law
\det\left[\left(\begin{array}{cc}1&0\\0&\overline{V}^T_{N-1}\end{array}\right)-x M\right]
\det\left(\begin{array}{cc}1&0\\0&V_{N-1}\end{array}\right)\\
&=b\
\det\left(\begin{array}{cc}a&\overline{\tilde{M}}^T_1\\\tilde{M}_1&\overline{V}^T_{N-1}-x
I_{N-1}\end{array}\right)
\det\left(\begin{array}{cc}1&0\\0&V_{N-1}\end{array}\right)\\
\end{align*}
with $b=\frac{-x(1-x)}{1-\overline{M}_{11}}$ and $a=\frac{(1-x M_{11})(1-\overline{M}_{11})}{-x(1-x)}$.
As we want to express these terms with respect to $\det(I_{N-1}-V_{N-1})$, writing $B:=\overline{V}^T_{N-1}-x I_{N-1}$ leads to
\begin{align*}
\det(I_N-x V_N)&\law b\
\det\left(\begin{array}{cc}a&\overline{\tilde{M}}^T_1\\\tilde{M}_1&
B\end{array}\right)
\det\left(\begin{array}{cc}1&0\\-B^{-1}\tilde{M}_1&V_{N-1}\end{array}\right)\\
&=b\ \det\left(\begin{array}{cc}a-\overline{\tilde{M}}^T_1B^{-1}\tilde{M}_1&\cdots\\0& B V_{N-1}\end{array}\right)\\
&=b\ (a-\overline{\tilde{M}}^T_1B^{-1}\tilde{M}_1)\ \det(I_{N-1}-xV_{N-1}).
\end{align*}
This is the expected result.
\end{proof}

One may try to get a new proof of Weyl's integration formula
thanks to this recursive construction of a characteristic
polynomial. Let $\nu_N$ be the probability measure on $[0,2\pi)^N$
with density
\[
\nu_N(\dd a_1,\dots,\dd a_N)=c_N\prod_{j\neq k}\left|e^{i
a_j}-e^{i a_k}\right|^2\dd a_1\dots\dd a_N.
\]
It would be sufficient to show that if $(\theta_1,\dots,\theta_N)$
and $(\tilde{\theta}_1,\dots,\tilde{\theta}_{N-1})$ are
independent and respectively distributed according to $\nu_N$ and
$\nu_{N-1}$, then for all $x\in\RR$, with the notations of the
proposition,
\begin{multline*}
\prod_{j=1}^{N}(1-xe^{i\theta_j}) \law(1-x M_{11})
\prod_{j=1}^{N-1}(1-xe^{i\tilde{\theta}_j}) \\
+\frac{x(1-x)}{1-\overline{M_{11}}}\sum_{j=1}^{N-1}e^{i\tilde{\theta}_j}|M_{1,j+1}|^2\prod_{k\neq
j}\left(1-x e^{i\tilde{\theta}_k}\right).
\end{multline*}
However, this identity in law does not seem to have an easy direct
explanation.

\section{Decomposition into independent random variables}
\label{section:DecompIndRV}

\subsection{Some formulae about the beta-gamma algebra}
\label{subsection:BetaGammaAlgebra}

We recall here some well known facts about the beta-gamma algebra
which we shall often use in the sequel. A gamma random variable
$\gamma_{a}$ with coefficient $a>0$ has density given by:
\begin{equation*}
\Prob\left\{\gamma_{a}\in \dd
t\right\}=\frac{t^{a-1}}{\Gamma\left(a\right)}e^{-t}\dd t.
\end{equation*}
Its Mellin transform is ($s>0$)
\begin{equation*}
\E\left[\gamma_{a}^{s}\right]=\frac{\Gamma\left(a+s\right)}{\Gamma\left(a\right)}.
\end{equation*}
A beta random variable $\beta_{a,b}$ with strictly positive
coefficients $a$ and $b$ has density on $\left[0,1\right]$ given
by
\begin{equation*}
\Prob\left\{\beta_{a,b}\in \dd
t\right\}=\frac{\Gamma\left(a+b\right)}{\Gamma\left(a\right)\Gamma\left(b\right)}t^{a-1}\left(1-t\right)^{b-1}\dd
t.
\end{equation*}
Its Mellin transform is ($s>0$)
\begin{equation}\label{eq:mmts_beta}
\E\left[\beta_{a,b}^{s}\right] = \frac{\Gamma\left(a+s\right)}
{\Gamma\left(a\right)} \frac{\Gamma\left(a+b\right)}
{\Gamma\left(a+b+s\right)}.
\end{equation}

We will also make use of the following two properties (see
\cite{ChauYor} for justifications): the algebra property where all
variables are independent
\begin{equation*}
\beta_{a,b}\gamma_{a+b}\law\gamma_{a},
\end{equation*}
and the duplication formula for the gamma variables, with all
variables independent
$$\gamma_{j}\law 2\sqrt{\gamma_{\frac{j}{2}}\gamma'_{\frac{j+1}{2}}}.$$

\subsection{The joint law of $(|Z_{N}| , \Im\log Z_{N} )$}
\label{section:JointLawZ_N}

In this Section, we use the Mellin Fourier transform
\eqref{mellinfourier} obtained in Section~\ref{section:Haar} to
deduce the joint law of $(|Z_{N}| , \Im\log Z_{N} )$. For
simplicity, let us write
\[
\Delta_{N}\equiv|Z_{N}|, \mathrm{and}\;\; I_{N}\equiv\Im\log Z_{N}
\]
so with this notation formula \eqref{mellinfourier} states
\begin{equation}\label{Mellinfourier2}
    \E\left[\Delta_{N}^t e^{i s I_{N}}\right]=\prod_{k=1}^N\frac{\Gamma\left(k\right)\Gamma\left(k+t\right)}{\Gamma\left(k+\frac{t+s}{2}\right)\Gamma\left(k+\frac{t-s}{2}\right)}.
\end{equation}

\begin{lem}
Let $W_j$ have density
\[
K_j \cos(v)^{2(j-1)} \I_{(-\pi/2,\pi/2)},
\]
where
\[
K_j = \frac{2^{2\left(j-1\right)}
\left(\left(j-1\right)!\right)^{2}} {\pi \left(2j-2\right)!} ,
\]
and let
\[
X := \beta_{j,j-1} 2 \cos W_j e^{\ii W_j} .
\] where all the random variables in sight are independent.
Then
\begin{equation}
\E\left[|X|^t  e^{\ii s \arg X} \right] = \frac{\Gamma(j)
\Gamma(j+t)}{ \Gamma(j+(t+s)/2) \Gamma(j+(t-s)/2)}
\end{equation}
\end{lem}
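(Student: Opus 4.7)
The plan is to exploit the independence of $\beta_{j,j-1}$ and $W_j$ to split the expectation into a Mellin moment of a beta variable and a trigonometric integral, then invoke a classical cosine-exponential integral to recognize the remaining gamma factors.

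First I would note that on $(-\pi/2,\pi/2)$ we have $\cos W_j > 0$, so that $|X| = 2\beta_{j,j-1}\cos W_j$ and $\arg X = W_j$. Using independence,
\begin{equation*}
\E\left[|X|^{t}e^{\ii s\arg X}\right]
\;=\;\E\!\left[\beta_{j,j-1}^{t}\right]\cdot\E\!\left[(2\cos W_{j})^{t}e^{\ii s W_{j}}\right].
\end{equation*}
The first factor is handled by \eqref{eq:mmts_beta}, giving $\Gamma(j+t)\Gamma(2j-1)/[\Gamma(j)\Gamma(2j-1+t)]$.

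For the second factor I would use the classical identity, valid for $\Re(a)>-1$ and real $b$,
\begin{equation*}
\int_{-\pi/2}^{\pi/2}(\cos v)^{a}e^{\ii b v}\,\dd v
\;=\;\frac{\pi\,\Gamma(a+1)}{2^{a}\,\Gamma\!\left(1+\tfrac{a+b}{2}\right)\Gamma\!\left(1+\tfrac{a-b}{2}\right)} .
\end{equation*}
(One may derive this by the same trick as in Lemma~\ref{lem:MellinFourier}: write $(\cos v)^{a}=2^{-a}(e^{\ii v}+e^{-\ii v})^{a}$, expand both factors by the binomial series in $e^{\pm 2\ii v}$, and integrate against $e^{\ii bv}$, keeping only terms where the exponentials cancel; the resulting series is a Gauss hypergeometric value that simplifies by Gauss's summation.) Applying this with $a=t+2(j-1)$ and $b=s$ yields
\begin{equation*}
\E\!\left[(2\cos W_{j})^{t}e^{\ii s W_{j}}\right]
=2^{t}K_{j}\cdot\frac{\pi\,\Gamma(t+2j-1)}{2^{\,t+2j-2}\,\Gamma\!\left(j+\tfrac{t+s}{2}\right)\Gamma\!\left(j+\tfrac{t-s}{2}\right)}.
\end{equation*}

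Finally I would substitute the explicit value of $K_{j}$, observing that
\[
\frac{K_{j}\,\pi}{2^{\,2j-2}}=\frac{((j-1)!)^{2}}{(2j-2)!}=\frac{\Gamma(j)^{2}}{\Gamma(2j-1)},
\]
so the cosine factor simplifies to $\Gamma(j)^{2}\Gamma(t+2j-1)/[\Gamma(2j-1)\Gamma(j+\tfrac{t+s}{2})\Gamma(j+\tfrac{t-s}{2})]$. Multiplying by the beta moment, the factors $\Gamma(2j-1)$ and $\Gamma(2j-1+t)=\Gamma(t+2j-1)$ cancel, leaving exactly $\Gamma(j)\Gamma(j+t)/[\Gamma(j+\tfrac{t+s}{2})\Gamma(j+\tfrac{t-s}{2})]$, as desired. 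The only real obstacle is the cosine integral identity; everything else is just bookkeeping with the gamma function and the definition of $K_{j}$.
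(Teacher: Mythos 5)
Your proposal is correct and is essentially the paper's own proof: both split off the beta Mellin moment and evaluate the remaining cosine–Fourier integral, the only cosmetic difference being that you quote the classical formula $\int_{-\pi/2}^{\pi/2}(\cos v)^a e^{\ii b v}\,\dd v = \pi\Gamma(a+1)/\bigl[2^a\Gamma(1+\tfrac{a+b}{2})\Gamma(1+\tfrac{a-b}{2})\bigr]$, while the paper rederives it in place by expanding $(1+e^{2\ii x})^{j-1+(t+s)/2}(1+e^{-2\ii x})^{j-1+(t-s)/2}$, collecting diagonal terms, and applying Gauss's summation for ${}_2F_1$ at $z=1$ --- exactly the derivation you sketch parenthetically. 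The bookkeeping with $K_j$ and the gamma cancellations is the same in both.
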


\begin{proof}
By the definition of $X$, we have that
\begin{multline*}
\E\left[ |X|^t e^{\ii s \arg X} \right] = \\
\E\left[ (\beta_{j,j-1})^t \right] K_j \int_{-\pi/2}^{\pi/2}
e^{\ii s x} \left(e^{\ii x} + e^{-\ii x}\right)^t  \left(e^{\ii x}
+ e^{-\ii x}\right)^{2(j-1)} \;\dd x
\end{multline*}
By \eqref{eq:mmts_beta} we have
\[
\E\left[ (\beta_{j,j-1})^t \right] = \frac{\Gamma(j+t)
\Gamma(2j-1)}{\Gamma(j)\Gamma(2j-1+t)}
\]
Note that
\begin{multline*}
e^{\ii s x} \left(e^{\ii x} + e^{-\ii x}\right)^{2(j-1)}
\left(e^{\ii x} + e^{-\ii x}\right)^t \\
= \left(1+e^{2\ii
x}\right)^{j-1+(t+s)/2} \left(1+e^{-2\ii x}\right)^{j-1+(t-s)/2}
\end{multline*}
Both terms on the RHS can be expanded as a series in $e^{2 \ii x}$
or $e^{-2\ii x}$ for all $x$ other than $x=0$. Integrating over
$x$ between $-\pi/2$ and $\pi/2$, only the diagonal terms survive,
and so
\begin{multline*}
\int_{-\pi/2}^{\pi/2} e^{\ii s x} \left(e^{\ii x} + e^{-\ii
x}\right)^{2(j-1)} \left(e^{\ii x} + e^{-\ii x}\right)^t \;\dd x
\\
= \sum_{k=0}^\infty \frac{\left(-(j-1+(t+s)/2)\right)_k
\left(-(j-1+(t-s)/2)\right)_k}{k! k!} \\
=H(-(j-1+(t+s)/2 ),-( j-1+(t-s)/2), 1; 1)
\end{multline*}
where $H$ is a hypergeometric function. The values of
hypergeometric functions at $z=1$ are well known (see, for
example \cite{AAR}), and are given by
\[
\frac{\Gamma(2j-1+t) }{\Gamma(j+(t+s)/2) \Gamma(j+(t-s)/2)}
\]
and this completes the proof.
\end{proof}

The next Theorem now follows easily from the previous lemma:
\begin{thm}\label{thmfondamentaldedecomposition}
Let $\Delta_{N}\equiv|Z_{N}|$ and $ I_{N}\equiv\Im\log Z_{N} $.
Let $\left(\beta_{j,j-1}\right)_{1\leq j\leq N}$ be independent
beta variables of parameters $j$ and $j-1$ respectively (with the
convention that $\beta_{1,0}\equiv1$). Define
$W_{1},\ldots,W_{N}$ as independent random variables which are
independent of the $\left(\beta_{j,j-1}\right)_{1\leq j\leq N}$,
with $W_{j}$ having the density:
\begin{equation}\label{Eq1}
 \sigma_{2\left(j-1\right)}\left(\dd
v\right) =
\dfrac{2^{2\left(j-1\right)}\left(\left(j-1\right)!\right)^{2}}{\pi \left(2j-2\right)!}\cos^{2\left(j-1\right)}\left(v\right)\I_{\left(\frac{-\pi}{2},\frac{\pi}{2}\right)}\dd
v.
\end{equation}
Then, the joint distribution of $\left(I_{N},\Delta_{N}\right)$ is
given by:
\begin{equation}\label{loijointe}
    \left(I_{N},\Delta_{N}\right)\law \left(\sum_{j=1}^{N}W_{j},\prod_{j=1}^{N}\beta_{j,j-1}2\cos W_{j}\right).
\end{equation}
\end{thm}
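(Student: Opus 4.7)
The plan is to construct a random variable with the same Mellin--Fourier transform as $Z_N$ out of the $W_j$ and $\beta_{j,j-1}$, and then read off the modulus and argument. First, I would set
\[
X_j := \beta_{j,j-1}\, 2\cos(W_j)\, e^{\ii W_j}, \qquad j=1,\dots,N,
\]
with all the $\beta_{j,j-1}$ and $W_j$ mutually independent. Since $W_j\in(-\pi/2,\pi/2)$ one has $2\cos W_j>0$, so that $|X_j|=\beta_{j,j-1}\,2\cos W_j$ and $\arg X_j=W_j$ without any modulo-$2\pi$ ambiguity. The preceding lemma then yields
\[
\E\!\left[|X_j|^{t}\, e^{\ii s\arg X_j}\right]=\frac{\Gamma(j)\,\Gamma(j+t)}{\Gamma\!\left(j+\tfrac{t+s}{2}\right)\Gamma\!\left(j+\tfrac{t-s}{2}\right)}
\]
for all $(t,s)$ in the strip $\Re(t\pm s)>-1$.

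Next, I would exploit the mutual independence of the $X_j$. The modulus of $\prod_{j=1}^N X_j$ equals $\prod_j|X_j|=\prod_j\beta_{j,j-1}\,2\cos W_j$, while its argument, taken as the real-valued sum $\sum_j\arg X_j=\sum_j W_j\in(-N\pi/2,N\pi/2)$, again carries no modular ambiguity. Multiplying the identities above gives
\[
\E\!\left[\Bigl|\prod_{j=1}^N X_j\Bigr|^{t}\exp\!\Bigl(\ii s\sum_{j=1}^N W_j\Bigr)\right]=\prod_{j=1}^N\frac{\Gamma(j)\,\Gamma(j+t)}{\Gamma\!\left(j+\tfrac{t+s}{2}\right)\Gamma\!\left(j+\tfrac{t-s}{2}\right)},
\]
which is exactly the right-hand side of the Keating--Snaith identity \eqref{Mellinfourier2} for $(\Delta_N,I_N)$.

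The remaining step, which is the main delicate point, is to conclude that agreement of the Mellin--Fourier transform on the strip $\Re(t\pm s)>-1$ forces equality in law of the $\RR^2$-valued vectors $(\log\Delta_N,I_N)$ and $\bigl(\sum_j\log|X_j|,\sum_j W_j\bigr)$. Matching complex moments at \emph{integer} values of $s$ would only determine the law of $(|Z_N|,e^{\ii I_N})$ on $\RR_+\times S^1$; what rescues us is that the identity holds for all real $s$ in a non-empty interval. Specializing to $t=\ii\tau$ with $\tau\in\RR$ (which lies inside the strip) converts the Mellin--Fourier transform into the ordinary characteristic function of the two-dimensional vector, and Fourier uniqueness then identifies the joint laws on $\RR^2$, consistently with the branch convention for $\Im\log Z_N$ fixed in the introduction. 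Exponentiating the first coordinate yields \eqref{loijointe}.
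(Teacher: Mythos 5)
Your proof is correct and takes essentially the same route as the paper. The paper states only that the theorem ``follows easily from the previous lemma,'' leaving the reader to supply the final identification step; what you have done is make explicit the two components of that step: (i) multiply the single-factor Mellin--Fourier transforms using independence of the $X_j := \beta_{j,j-1}\,2\cos(W_j)\,e^{\ii W_j}$ to recover the right-hand side of \eqref{Mellinfourier2}, observing that $|X_j| = \beta_{j,j-1}\,2\cos W_j$ and $\arg X_j = W_j$ with no modular ambiguity because $W_j\in(-\pi/2,\pi/2)$; and (ii) deduce equality of joint laws on $\RR^2$ from agreement of the transform, which you correctly note requires the non-integer range of $s$ --- agreement at integer $s$ would only determine the law of $(\Delta_N, e^{\ii I_N})$. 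Your remark that $t=\ii\tau$ together with boundedness of $I_N$ (hence analyticity of the characteristic function in $s$) is what extends the identity from $|s|<1$ to all real $s$, and thence to equality of laws by Fourier uniqueness, is a genuine clarification that the paper elides.
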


We now recover a formula obtained in \cite{NikYor} in the study of
the relations between the Barnes function and generalized gamma
variables. To this end, we need the following elementary lemma
whose proof is left to the reader:
\begin{lem}
Let $V_{t}$ be distributed as
\[
\mathbb{P}\left(V_{t}\in\dd
v\right)=\dfrac{\left(2\cos\left(v\right)\right)^{t}}{\pi
K_{t}},\quad |v|<\frac{\pi}{2},
\]
with $K_{t}=\frac{\Gamma\left(1+t\right)}
{\left(\Gamma\left(1+\frac{t}{2}\right)\right)^{2}}$. Then
\[
\cos\left(V_{t}\right) \law
\sqrt{\beta_{\frac{t+1}{2},\frac{1}{2}}} .
\]
If $W_{j} \law V_{2\left(j-1\right)}$ then
\begin{equation}\label{eq00}
\cos\left(W_{j}\right) \law
\sqrt{\beta_{j-\frac{1}{2},\frac{1}{2}}}.
\end{equation}
\end{lem}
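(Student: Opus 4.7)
The plan is to compute the law of $\cos^2(V_t)$ by a direct change of variables and then identify it with the beta distribution, with the normalization matched via the Legendre duplication formula for $\Gamma$. Since $\cos$ is even and $V_t$ has a symmetric density on $(-\pi/2,\pi/2)$, I would first replace $V_t$ by $|V_t|$ on $(0,\pi/2)$, whose density is $\tfrac{2(2\cos v)^t}{\pi K_t}$. On this interval $v \mapsto \cos v$ is a bijection onto $(0,1)$, so setting $Y = \cos|V_t|$ and then $Z = Y^2$, a routine Jacobian computation gives
\begin{equation*}
f_Z(z) \;=\; \frac{2^{t}}{\pi K_{t}}\, z^{(t-1)/2}(1-z)^{-1/2}, \qquad 0 < z < 1.
\end{equation*}

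Next, I would compare this with the density of $\beta_{(t+1)/2,1/2}$, which by the definition in Section~\ref{subsection:BetaGammaAlgebra} equals
\begin{equation*}
\frac{\Gamma(t/2+1)}{\Gamma((t+1)/2)\,\sqrt{\pi}}\, z^{(t-1)/2}(1-z)^{-1/2}.
\end{equation*}
The two densities have identical $z$-dependence, so the identity $\cos^2(V_t) \law \beta_{(t+1)/2,1/2}$ (hence $\cos(V_t)\law \sqrt{\beta_{(t+1)/2,1/2}}$, as both sides are nonnegative) reduces to the scalar equality
\begin{equation*}
2^{t}\,\Gamma\!\left(\tfrac{t+1}{2}\right)\Gamma\!\left(\tfrac{t}{2}+1\right) \;=\; \sqrt{\pi}\,\Gamma(1+t).
\end{equation*}

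This is precisely the Legendre duplication formula $\Gamma(z)\Gamma(z+\tfrac{1}{2}) = 2^{1-2z}\sqrt{\pi}\,\Gamma(2z)$ applied at $z=(t+1)/2$, so the normalization matches automatically. The second assertion follows immediately: taking $t = 2(j-1)$ identifies $W_j$ in law with $V_{2(j-1)}$, and the formula becomes $\cos(W_j) \law \sqrt{\beta_{j-1/2,\,1/2}}$, which is \eqref{eq00}.

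There is no real obstacle here; the only thing to be careful about is the two-to-one nature of $\cos$ on $(-\pi/2,\pi/2)$ (handled by symmetry) and keeping track of the constant $K_t$ so that the duplication formula appears in the right form. Alternatively, one could compute $\mathbb{E}[\cos(V_t)^s]$ using the classical integral $\int_{-\pi/2}^{\pi/2}\cos(v)^{p}\,\dd v = \sqrt{\pi}\,\Gamma(\tfrac{p+1}{2})/\Gamma(\tfrac{p}{2}+1)$ and match it against the Mellin transform of $\sqrt{\beta_{(t+1)/2,1/2}}$ given by \eqref{eq:mmts_beta}; the same duplication identity closes the argument.
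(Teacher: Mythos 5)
Your proof is correct. The paper explicitly leaves this lemma's proof to the reader, and your argument --- change of variables to get the density of $\cos^2(V_t)$, identification of the beta density, and verification of the normalization constant via the Legendre duplication formula at $z=(t+1)/2$ --- is exactly the straightforward computation one would expect; the Jacobian and the final scalar identity both check out, and the substitution $t=2(j-1)$ immediately yields \eqref{eq00}.
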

\begin{prop}[\cite{NikYor}] \label{propNY}
Let $\left(\gamma_{j}\right)_{1\leq j\leq N}$ and
$\left(\gamma_{j}'\right)_{1\leq j\leq N}$ be sequences of
independent gamma(j) variables. Then we have
\begin{equation}\label{eqbarnes}
\prod_{j=1}^{N}\gamma_{j} \law \Delta_{N} \prod_{j=1}^{N}
\sqrt{\gamma_{j} \gamma'_{j} }.
\end{equation}
\end{prop}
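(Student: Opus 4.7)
The plan is to reduce the identity \eqref{eqbarnes} to a short chain of elementary beta--gamma rewritings by plugging in the distributional representation of $\Delta_N$ coming from Theorem~\ref{thmfondamentaldedecomposition}. Throughout, all variables appearing are taken mutually independent; fresh independent copies may be introduced after each rewriting, which is legitimate since we only assert equalities in law.

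The first step is to replace $\Delta_N$ in the right-hand side of \eqref{eqbarnes} by $\prod_{j=1}^{N}2\beta_{j,j-1}\cos W_{j}$ via Theorem~\ref{thmfondamentaldedecomposition}, and then to use \eqref{eq00} to replace each $\cos W_{j}$ by $\sqrt{\beta_{j-1/2,1/2}}$. Grouping the square roots, this yields
$$\Delta_{N}\prod_{j=1}^{N}\sqrt{\gamma_{j}\gamma_{j}'}\law \prod_{j=1}^{N}2\beta_{j,j-1}\sqrt{\beta_{j-1/2,1/2}\,\gamma_{j}\,\gamma_{j}'}.$$
For each $j$ I would then apply the algebra property $\beta_{a,b}\gamma_{a+b}\law\gamma_{a}$ with $a=j-\tfrac{1}{2}$, $b=\tfrac{1}{2}$, giving $\beta_{j-1/2,1/2}\gamma_{j}\law\gamma_{j-1/2}$, so the right-hand side becomes $\prod_{j=1}^{N}2\beta_{j,j-1}\sqrt{\gamma_{j-1/2}\gamma_{j}'}$. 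The duplication formula applied at index $2j-1$ reads $\gamma_{2j-1}\law 2\sqrt{\gamma_{j-1/2}\gamma_{j}'}$, so the factor $2\sqrt{\gamma_{j-1/2}\gamma_{j}'}$ may be replaced by $\gamma_{2j-1}$. A final application of the algebra property, now with $a=j$, $b=j-1$, gives $\beta_{j,j-1}\gamma_{2j-1}\law\gamma_{j}$, and the right-hand side collapses to $\prod_{j=1}^{N}\gamma_{j}$, which is the left-hand side of \eqref{eqbarnes}.

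The one point to watch is the bookkeeping of independence at each substitution: the beta--gamma rewriting rules apply only to two independent variables, and the new variable produced must remain independent of the remaining factors indexed by the other values of $j$. This is automatic here because all factors in our products are independent from the outset and we rewrite each $j$-block in isolation. No computation beyond the displayed identities is required, which is why the argument, once organized in this way, is essentially mechanical.
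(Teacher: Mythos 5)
Your proof is correct and uses exactly the same four ingredients as the paper: the second component of \eqref{loijointe}, the identity \eqref{eq00}, the beta--gamma algebra property $\beta_{a,b}\gamma_{a+b}\law\gamma_a$ (applied twice, once with $a=j-\tfrac12,b=\tfrac12$ and once with $a=j,b=j-1$), and the duplication formula for $\gamma_{2j-1}$. The only difference is organizational: the paper multiplies both sides of \eqref{eqbarnes} by the auxiliary product $\prod_{j=1}^{N}\gamma_{2j-1}$, transforms the two sides separately into \eqref{equat} and \eqref{equat1}, and then ``compares'' them --- a step which implicitly cancels the common independent factors $\sqrt{\beta_{j-1/2,1/2}}$ from both sides of an equality in law (justified, e.g., by factorizing Mellin transforms). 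Your version rewrites the right-hand side of \eqref{eqbarnes} forward through a chain of substitutions, each of which is a bona fide equality in law between independent blocks, so no cancellation step is needed. That is a slightly cleaner packaging of the same argument; the mathematical content is identical.
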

\begin{proof}
Considering only the second component in \eqref{loijointe}, and
multiplying both sides by $\prod_{j=1}^{N}\gamma_{2j-1}$, and
thanks to the beta-gamma algebra, we obtain:
\begin{equation}\label{equa}
\Delta_{N}\left(\prod_{j=1}^{N}\gamma_{2j-1}\right) \law
\left(\prod_{j=1}^{N}\gamma_{j}\right)2^{N}\left(\prod_{j=1}^{N}\cos\left(W_{j}\right)\right).
\end{equation}
Now we apply the lemma to the right hand side of the above equality
in law to obtain that
\begin{equation}\label{equat}
\left( \prod_{j=1}^{N} \gamma_{j} \right) 2^{N} \left(
\prod_{j=1}^{N} \cos\left( W_{j} \right) \right) \law \left(
\prod_{j=1}^{N} \gamma_{j} \right) 2^{N} \prod_{j=1}^{N}
\sqrt{\beta_{j-\frac{1}{2},\frac{1}{2}}}.
\end{equation}
On the other hand, from the duplication formula for the gamma
function, we have for any $a>0$
\[
\gamma_{a} \law 2 \sqrt{ \gamma_{\frac{a}{2}}
\gamma'_{\frac{a+1}{2}} },
\]
thus, on the left hand side of \eqref{equa} we get
\begin{equation}\label{equat1}
\Delta_{N} 2^{N} \prod_{j=1}^{N} \sqrt{ \gamma_{\frac{2j-1}{2}}
\gamma'_{j}} \law \Delta_{N} 2^{N} \prod_{j=1}^{N} \sqrt{
\beta_{j-\frac{1}{2},\frac{1}{2}} \gamma_{j} \gamma'_{j} }.
\end{equation}
Now comparing \eqref{equat} and \eqref{equat1} we obtain
\[
\prod_{j=1}^{N} \gamma_{j} \law \Delta_{N} \prod_{j=1}^{N} \sqrt{
\gamma_{j} \gamma'_{j} } .
\]
\end{proof}
Infinitely divisible laws form a very remarkable and well studied
family of laws in Probability Theory. It is easily see from
Proposition~\ref{propNY} that the law of $\log|Z_{N}|$  is
infinitely divisible.
\begin{prop}
The law of $\log|Z_{N}|$  is infinitely divisible.
\end{prop}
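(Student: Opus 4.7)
The plan is to prove infinite divisibility by writing down an explicit L\'evy--Khintchine representation for the characteristic function of $\log|Z_{N}|$, starting from Proposition~\ref{propNY}. From that proposition, the independence of $\Delta_{N}$ and the $(\gamma_{j},\gamma_{j}')_{1\leq j\leq N}$ on the right-hand side, together with $\E[\gamma_{j}^{s}]=\Gamma(j+s)/\Gamma(j)$, yields
$$\E\bigl[\Delta_{N}^{s}\bigr] \;=\; \prod_{j=1}^{N}\frac{\Gamma(j+s)\,\Gamma(j)}{\Gamma(j+s/2)^{2}},$$
so the characteristic function of $\log|Z_{N}|$ factors as $\prod_{j=1}^{N} f_{j}(\ii u)$ with $f_{j}(s):=\Gamma(j+s)\Gamma(j)/\Gamma(j+s/2)^{2}$. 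Since a product of infinitely divisible characteristic functions is infinitely divisible, it suffices to establish that each $f_{j}(\ii u)$ has this property.

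The key tool is Malmsten's integral representation
$$\log\Gamma(a) \;=\; \int_{0}^{\infty}\!\Bigl[(a-1)e^{-t}-\frac{e^{-t}-e^{-at}}{1-e^{-t}}\Bigr]\frac{\dd t}{t}, \quad \Re a>0.$$
A short calculation in which the affine contributions $(a-1)e^{-t}$ cancel (since $(j+s-1)+(j-1)-2(j+s/2-1)=0$) gives
$$\log f_{j}(s) \;=\; \int_{0}^{\infty}\frac{(e^{-st/2}-1)^{2}\,e^{-jt}}{t(1-e^{-t})}\,\dd t.$$
Splitting the square as $(e^{-st/2}-1)^{2}=(e^{-st}-1)-2(e^{-st/2}-1)$ and rescaling $u=t/2$ in the second piece, using $1-e^{-2t}=(1-e^{-t})(1+e^{-t})$, rearranges this as
$$\log f_{j}(s) \;=\; \int_{0}^{\infty}(e^{-st}-1)\,g_{j}(t)\,\dd t, \qquad g_{j}(t)\;=\;\frac{e^{-jt}+e^{-(j+1)t}-2e^{-2jt}}{t(1-e^{-2t})}.$$

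The main point to verify is the positivity $g_{j}(t)\geq 0$ for $t>0$ and $j\geq 1$, which is the one step where something could go wrong. It follows from AM--GM:
$$e^{-jt}+e^{-(j+1)t}\;\geq\; 2\sqrt{e^{-jt}e^{-(j+1)t}}\;=\;2e^{-(j+1/2)t}\;\geq\;2e^{-2jt},$$
the last inequality using $j+\tfrac{1}{2}\leq 2j$ for $j\geq 1$. A Taylor expansion shows $g_{j}(t)\sim (j-\tfrac{1}{2})/t$ as $t\to 0^{+}$ and $g_{j}$ decays exponentially at infinity, so $\int_{0}^{\infty}\min(1,t^{2})g_{j}(t)\,\dd t<\infty$ and $\int_{0}^{1}t\,g_{j}(t)\,\dd t<\infty$.

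Setting $s=\ii u$ and changing variables $x=-t$, one obtains
$$\log f_{j}(\ii u) \;=\; \int_{-\infty}^{0}(e^{\ii u x}-1)\,g_{j}(-x)\,\dd x,$$
which, after absorbing the truncation term $\ii u x\,\I_{|x|<1}$ into a finite real drift, is precisely the L\'evy--Khintchine exponent of an infinitely divisible distribution with Gaussian part zero and L\'evy measure $g_{j}(-x)\,\dd x$ supported on $(-\infty,0)$. Summing over $j$ yields the representation for $\log|Z_{N}|$ itself, proving the claim.
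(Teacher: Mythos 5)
Your proof is correct. It starts from the same place as the paper (Proposition~\ref{propNY}), but where the paper's proof is a one-line appeal to the infinite divisibility of $\log\gamma_a$ together with a reference to \cite{CPY}, you actually derive the full L\'evy--Khintchine representation of $\log|Z_N|$ via Malmsten's formula and reduce the question to a concrete positivity check, which you settle by AM--GM. This is worth noting: the paper's phrasing glosses over a genuine point, since $\phi_{\log\Delta_N}(u)=\prod_j \Gamma(j+\ii u)\Gamma(j)/\Gamma(j+\ii u/2)^2$ is a \emph{ratio} of characteristic functions of sums of independent $\log\gamma$'s, and a ratio of infinitely divisible characteristic functions is not automatically infinitely divisible --- one must verify that the signed difference of L\'evy measures is a positive measure. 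Your manipulation (splitting $(e^{-st/2}-1)^2=(e^{-st}-1)-2(e^{-st/2}-1)$, rescaling, and factoring $1-e^{-2t}$) exhibits the candidate L\'evy density $g_j$ explicitly, and your AM--GM bound shows $g_j\geq 0$; the integrability checks $g_j(t)\sim (j-\tfrac12)/t$ at $0^+$ and exponential decay at infinity are also right. So you are not just filling in the paper's argument --- you obtain the L\'evy measure of $\log|Z_N|$ as a by-product. One small simplification: for $j\geq 1$ one can argue even more directly that $e^{-jt}\geq e^{-2jt}$ and $e^{-(j+1)t}\geq e^{-2jt}$ term by term, so the numerator of $g_j$ is $\geq 0$ without invoking AM--GM; but your version is equally valid.
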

\begin{proof}
It follows from Proposition~\ref{propNY} and the fact that
logarithm of a gamma variable is infinitely divisible (see, for
example, \cite{CPY}).
\end{proof}
\begin{rem}
The law of $\Im\log Z_{N}$ is not infinitely divisible since it is a bounded random variable.
\end{rem}

\section{Central limit theorems} \label{section:CLT}

In this section, we give two alternative proofs of the following
central limit theorem by Keating and Snaith \cite{KeaSna}. The
first one from the decomposition in Section~\ref{section:Haar},
the second from the last decomposition in
Section~\ref{section:DecompIndRV}. The original proof by Keating
and Snaith relies on an expansion of formula \eqref{mellinfourier}
with cumulants.

\begin{thm}\label{thm:CharPolyCLT}
Let $Z_N:=\det(I_N-V_N)$, where $V_N$ is distributed with the Haar measure on the unitary group $U(N)$.
Then,
\begin{equation}
\frac{\log Z_N}{\sqrt{\frac{1}{2}\log N}} \convlaw \mathcal{N}_1 +
\ii \mathcal{N}_2,
\end{equation}
as $N\to\infty$, with $\mathcal{N}_1$ and $\mathcal{N}_2$ independent standard normal variables.
\end{thm}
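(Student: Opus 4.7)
The plan is to give two independent proofs, each reducing the theorem to a classical central limit theorem for triangular arrays of independent random variables. Using Proposition~\ref{prop:Decomp1} one writes
\[
\log Z_N\law\sum_{k=1}^N X_k,\qquad X_k:=\log\bigl(1+e^{\ii\theta_k}\sqrt{\beta_{1,k-1}}\bigr),
\]
with the $X_k$ independent. Alternatively, Theorem~\ref{thmfondamentaldedecomposition} yields
\[
\bigl(\Im\log Z_N,\ \log|Z_N|\bigr)\law\Bigl(\sum_{j=1}^NW_j,\ \sum_{j=1}^N\bigl(\log 2+\log\cos W_j+\log\beta_{j,j-1}\bigr)\Bigr).
\]
In either setting the Cram\'er--Wold device reduces the bivariate statement to a one-dimensional CLT for an arbitrary real linear combination of the coordinates, so the remaining work is to identify the asymptotic variance and to verify Lindeberg's condition.

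For the first proof I would expand $\log(1+z)=\sum_{n\geq 1}(-1)^{n-1}z^n/n$ and integrate first in the uniform variable $\theta_k$: only diagonal terms survive, which forces $\E X_k=0$, makes $\Re X_k$ and $\Im X_k$ uncorrelated by parity in $\theta_k$, and gives
\[
\E|X_k|^2=\sum_{n\geq 1}\frac{\E[\beta_{1,k-1}^n]}{n^2}=\frac{1}{k}+O(k^{-2})
\]
via the beta moment formula \eqref{eq:mmts_beta}. Summing shows
\[
\mathrm{Var}(\Re\log Z_N)=\mathrm{Var}(\Im\log Z_N)=\tfrac12\log N+O(1)
\]
with vanishing joint covariance, which already pinpoints the normalisation and the independence of the two Gaussian limits.

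For the second proof the concentration of $\sigma_{2(j-1)}$ at $0$ as $j\to\infty$ gives, by a Laplace-type computation, $\E W_j=0$, $\E W_j^2=1/(2j-1)$, and $\mathrm{Var}(\log\cos W_j)=O(j^{-2})$; the digamma/trigamma asymptotics together with \eqref{eq:mmts_beta} give $\mathrm{Var}(\log\beta_{j,j-1})\sim 1/j$. Crucially, $\E[W_j\log\cos W_j]=0$ by the symmetry $v\mapsto -v$ of the density, and $\beta_{j,j-1}$ is independent of $W_j$, so the two coordinates of the block indexed by $j$ are already uncorrelated. A Cram\'er--Wold linear combination of the two sums therefore has variance $\tfrac12(\alpha^2+\beta^2)\log N+O(1)$, matching the diagonal covariance of the claimed complex Gaussian limit.

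The main obstacle in either approach will be the verification of Lindeberg's condition uniformly in $N$. The sensitive summands are those of small index, where the underlying densities have either no decay (e.g.\ $\beta_{1,0}\equiv 1$ produces the unbounded real part $\log|1+e^{\ii\theta_1}|$, and $W_1$ has Lebesgue density on a full interval) or only mild decay; these terms contribute $O(1)$ rather than $o(\log N)$ to the total variance. Since each such individual summand is nevertheless in $L^{2+\delta}$ for some $\delta>0$, I would dispatch them by isolating a fixed finite range of indices and applying the bulk moment estimates only for $k\geq k_0$ (respectively $j\geq j_0$); once this is in place, Lindeberg's theorem delivers the desired Gaussian limit in both proofs.
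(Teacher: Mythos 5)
Your two-proof plan mirrors the paper's structure (Sections 4.1 and 4.2), and for the second proof — the decomposition $(\Im\log Z_N,\log|Z_N|)\law(\sum W_j,\sum T_j)$, cumulant/moment computations via the Mellin--Fourier transform, and verification of a Lindeberg/Lyapunov condition before invoking a multidimensional CLT — you are following essentially the same route as the paper. The paper bounds the Lyapunov ratios by showing $\sum_n\E|T_n|^3$ and $\sum_n\E|W_n|^3$ converge, which already dispatches the small-index terms you were worried about; your ``isolate a finite initial block'' workaround is not needed there. (Minor slip: $\mathrm{Var}(\log\beta_{j,j-1})=\psi'(j)-\psi'(2j-1)\sim 1/(2j)$, not $1/j$, consistent with $\mathrm{Var}(T_j)=\tfrac12\psi'(j)$.)

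There is, however, a genuine gap in your first proof. You write ``Using Proposition~\ref{prop:Decomp1} one writes $\log Z_N\law\sum_{k=1}^N X_k$ with $X_k=\log(1+e^{\ii\theta_k}\sqrt{\beta_{1,k-1}})$'' and take this as the starting point. Proposition~\ref{prop:Decomp1} gives only $Z_N\law\prod_k(1+e^{\ii\theta_k}\sqrt{\beta_{1,k-1}})$; passing to logarithms determines $\sum_k X_k$ only modulo $2\pi\ii\ZZ$, so a priori one only controls $\arg Z_N$ up to a random integer multiple of $2\pi$ — which is not enough to deduce the CLT for $\Im\log Z_N$ as defined in the Introduction. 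The paper devotes a dedicated step (``Proof for (1)'') to exactly this point, using the off-circle decomposition Proposition~\ref{prop:DecompOffCircle}: it shows that the trajectory $x\mapsto\log\det(I_N-xV_N)$, defined by continuity from $x=0$, decomposes recursively, and that the recursion's increment equals the principal-branch series $\sum_{j\geq 1}(-1)^{j+1}M_{11}^j/j$ for all $x\in[0,1]$ a.s.\ because $|f(x,\cdot)|<1$; evaluating at $x=1$ and inducting in $N$ gives $\log Z_N\law X_N$ exactly. Your proposal omits this, so as written it proves the statement only for $\Re\log Z_N$ and for $\Im\log Z_N$ modulo $2\pi$.

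Beyond that gap, your treatment of part (2) of the first proof also diverges from the paper's in a way worth flagging. Rather than applying a bivariate Cram\'er--Wold/Lindeberg argument directly to the unbounded summands $X_k$, the paper splits $X_N=X_1(N)-\tfrac12X_2(N)+X_3(N)$; the dominant sum $X_1(N)=\sum_k e^{\ii\theta_k}\sqrt{\beta_{1,k-1}}$ has rotation-invariant law and \emph{bounded} summands, so the one-dimensional CLT applies painlessly (Lyapunov is immediate), while $X_2(N)$ is an $L^2$-bounded martingale and $X_3(N)$ is dominated by a fixed integrable random variable, so both are $o(\sqrt{\log N})$ almost surely. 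Your direct Cram\'er--Wold route can be made to work — your variance computation $\E|X_k|^2=1/k+O(k^{-2})$ is correct, and the summands are in $L^{2+\delta}$ — but it requires exactly the extra $L^{2+\delta}$/Lyapunov bookkeeping that the paper's splitting sidesteps. Conceptually the two are close; the paper's version is cleaner because the term actually responsible for the Gaussian fluctuation has bounded increments.
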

\rm

\subsection{Proof from the decomposition in Section~\ref{section:Haar}.}
\label{subsection:CLTFirstProof}

From Proposition~\ref{prop:Decomp1}, we know that
$$
\det (I_N-V_N)\law \prod_{k=1}^N
\left(1+e^{\ii\theta_k}\sqrt{\beta_{1,k-1}}\right),
$$
with $\theta_1,\dots,\theta_N,\beta_{1,0},\dots,\beta_{1,N-1}$
independent random variables, the $\theta_k$'s uniformly
distributed on $[0,2\pi]$ and the $\beta_{1,j}$'s ($0\leq j\leq
N-1$) being beta distributed with parameters 1 and $j$.

In the following we note
$$X_N:=\sum_{i=1}^N\log\left(1+e^{\ii\theta_k}\sqrt{\beta_{1,k-1}}\right),$$
with $\log(1+\epsilon)$ defined here as $\sum_{j\geq 1}
(-1)^{j+1}\epsilon^j/j$ (this is convergent a.s. because
$|e^{\ii\theta_k}\sqrt{\beta_{1,k-1}}|<1$ a.s.).

What we need in order to prove Theorem~\ref{thm:CharPolyCLT} is
the following :
\begin{enumerate}
\item first to show that $X_N$ is equal in law to $\log Z_N$, as it is defined in the introduction.
It is not so obvious, because
the imaginary parts could have a  $2k\pi$ difference.
\item then show that $X_N$ converges to what is expected in (4.1).
\end{enumerate}

{\it Proof for (1).} Equation~\eqref{eq:DecompCharPolyOff}, stated
for a fixed $x$, is also obviously also true for a trajectory, for
example for $x\in[0,1]$,
\begin{multline*}
 (\det(I_N-x V_N),0\leq x\leq
1) \\
\law ((1-f(x,V_{N-1},M_1))\det(I_{N-1}-x V_{N-1}),0\leq x\leq
1),
\end{multline*}
with the suitable $f$ from \eqref{eq:DecompCharPolyOff}. Let the
logarithm be defined as in the Introduction (ie by continuity from
$x=0$). The previous equation then implies, as $f$ is continuous
in $x$,
\[
\log \det(I_N-x V_N)\law \log (1-f(x,V_{N-1},M_1))+\log
\det(I_{N-1}-x V_{N-1}).
\]
One can easily check that $|f(x,V_{N-1},M_1)|<1$ for all
$x\in[0,1]$ a.s., so
\[
\log (1-f(x,V_{N-1},M_1))= \sum_{j\geq 0}
(-1)^{j+1}\frac{f(x,V_{N-1},M_1)^j}{j}
\]
for all $x\in[0,1]$ almost surely. In particular, for $x=1$, we
get
\[
\log \det(I_N-V_N)\law  \left(\sum_{j\geq 0}
(-1)^{j+1}\frac{M_{11}^j}{j}\right)+\log\det(I_{N-1}-V_{N-1}),
\]
which gives the expected result by an immediate induction. We have
therefore shown that $\log Z_N\law X_N$.

{\it Proof for (2).} The idea is basically that $\beta_{1,k-1}$ tends in law to a Dirac distribution on 0 as $k$ tends to
$\infty$. So $\log (1+e^{\ii\theta_k}\sqrt{\beta_{1,k-1}})$ is well approximated by $e^{\ii\theta_k}\sqrt{\beta_{1,k-1}}$,
and as this has a distribution invariant by rotation, the central limit theorem will be easily proven from classical results
in dimension 1.

More precisely, $X_N$ can be decomposed as
\[
X_N = \underbrace{ \sum_{k=1}^N e^{\ii\theta_k} \sqrt{
\beta_{1,k-1}} }_{X_1(N)} - \frac{1}{2} \underbrace{ \sum_{k=1}^N
e^{2\ii\theta_k} \beta_{1,k-1} }_{X_2(N)} + \underbrace{ \sum_{j\geq
3} \sum_{k=1}^N \frac{(-1)^{j+1}}{j} \left(e^{\ii\theta_k}
\sqrt{\beta_{1,k-1}} \right)^{j} }_{X_3(N)}
\]
where all the terms are absolutely convergent. We study these three terms separately.

Clearly $X_1(N)$ has a distribution which is invariant by
rotation, so to prove that $\frac{X_1(N)}{\sqrt{\frac{1}{2}\log
N}} \overset{\mbox{law}}\longrightarrow \mathcal{N}_1+\ii
\mathcal{N}_2$, we only need to prove the following result for the
real part :
$$
\frac{\sum_{k=1}^N\cos \theta_k\sqrt{\beta_{1,k-1}}}{\sqrt{\frac{1}{2}\log N}}
\overset{\mbox{law}}\longrightarrow \mathcal{N},
$$
where $\mathcal{N}$ is a standard normal variable. As $\E(\cos^2(\theta_k)\beta_{1,k-1})=\frac{1}{2k}$, this is a direct
consequence of the central limit theorem (our random variables check the Lyapunov condition).

To deal with $X_2(N)$, as $\sum_{k\geq 0} 1/k^2<\infty$, there
exists a constant $c>0$ such as $\E(|X_2(N)|^2)<c$ for all
$N\in\NN$. Thus $(X_2(N),N\geq 1)$ is a $L^2$-bounded martingale,
so it converges almost surely. Hence
\[
X_2(N)/\sqrt{\tfrac12 \log N} \to 0 \quad a.s.
\]

Finally, for $X_3(N)$, let
$Y:=\sum_{j=3}^\infty\sum_{k=1}^\infty\frac{1}{j}(\beta_{1,k-1})^{j/2}$.
One can easily check that $\E(Y)<\infty$, so $Y<\infty$ a.s., so as
$N\to\infty$
\[
|X_3(N)|/\sqrt{\tfrac12 \log N} < Y/\sqrt{\tfrac12 \log N}\to 0
\quad a.s.
\]

Gathering all these convergences, we get the expected result :
$$
\frac{X_N}{\sqrt{\frac{1}{2}\log N}}
\overset{\mbox{law}}\longrightarrow \mathcal{N}_1+\ii \mathcal{N}_2,
$$
with the notations of Theorem 4.1.

\subsection{Proof from the decomposition in section \ref{section:DecompIndRV}.}
\label{subsection:CLTSecondProof}

We shall give here a very simple proof of the central limit
theorem for $\frac{\log Z_{N}}{\sqrt{\frac{1}{2} \log N}}$ based
on the decomposition into sums of independent random variables and
a classical version of the multidimensional central limit theorem.

From Theorem~\ref{thmfondamentaldedecomposition}, we have:
\begin{equation}\label{loijointe2}
\left(\Im\log Z_{N},\log|Z_{N}|\right) \law
\left(\sum_{j=1}^{N}W_{j},\sum_{j=1}^{N}T_{j}\right).
\end{equation}
where
\begin{equation}\label{e1}
T_{j}=\log(\beta_{j,j-1}2\cos W_{j}).
\end{equation}

Now, from the discussion preceding Theorem \ref{thmfondamentaldedecomposition}, we have for $s>-1$ and $t>-1$:
\begin{eqnarray}
\E\left[e^{\ii sW_{j}}\right] &=& \frac{\Gamma(j)^2}{\Gamma\left(j+\frac{s}{2}\right)\Gamma\left(j-\frac{s}{2}\right)}, \\
\E\left[e^{tT_{j}}\right] & =&  \frac{\Gamma\left(j\right)\Gamma\left(j+t\right)}{\Gamma\left(j+\frac{t}{2}\right)^{2}}.
\end{eqnarray}
From these Fourier transforms, one can easily deduce the moments
or the cumulants of all orders for $W_{j}$ and $T_{j}$ (see
\cite{Petrov95, ProkStat, Saulis} for definition of cumulants and
their relations with moments) by taking successive derivatives at
$0$. For our purpose, we will only need the first three moments or
cumulants. Since the calculation of the derivatives have already
been done in \cite{KeaSna}, we will only recap them here. Let us
call $Q_{j,k}$ the $k$-th cumulant of $T_{j}$ and $R_{j,k}$ the
$k$\textsuperscript{th} cumulant of $W_{j}$. Then we have
\begin{equation*}
Q_{j,k}=\frac{2^{k-1}-1}{2^{k-1}}\psi^{\left(k-1\right)}\left(j\right)
\end{equation*}
and
\begin{equation*}
R_{j,k}=\begin{cases}
0 & \text{if $k$ is odd} \\
\frac{\left(-1\right)^{k/2+1}}{2^{k-1}} \psi^{\left(k-1\right)}\left(j\right) & \text{if $k$ is even}
\end{cases}
\end{equation*}
where
\begin{equation*}
\psi^{\left(k\right)}\left(z\right)=\dfrac{\dd^{k+1}\log\Gamma\left(z\right)}{dz^{k+1}}
\end{equation*}
are the polygamma functions. Now, since the cumulants of a sum of
independent random variables are the sum of the cumulants, we can
easily obtain that the cumulants of $\sum_{j=1}^{N}T_{j}$ and
$\sum_{j=1}^{N}W_{j}$ are respectively
$$\frac{2^{k-1}-1}{2^{k-1}}\sum_{j=1}^{N}\psi^{\left(k-1\right)}\left(j\right)$$
and $$\begin{cases}
0 & \text{if $k$ is odd} \\
\frac{\left(-1\right)^{k/2+1}}{2^{k-1}}\sum_{j=1}^{N} \psi^{\left(k-1\right)}\left(j\right) & \text{if $k$ is even}
\end{cases}.$$
Moreover, we have the following expansion of the polygamma
function (see, for example Corollary 1.4.5 of \cite{AAR}):
\begin{equation}\label{e2}
\psi\left(z\right)\sim\log z-\dfrac{1}{2z}-\sum_{n=1}^{\infty}\dfrac{B_{2n}}{2nz^{2n}}
\end{equation}and
\begin{equation}\label{e3}
\psi^{\left(k\right)}\left(z\right)=\left(-1\right)^{k-1}\left[\dfrac{\left(k-1\right)!}{z^{k}}+\dfrac{k!}{2z^{k+1}}
+\sum_{n=0}^{\infty}B_{2n}\dfrac{\left(2n+k-1\right)!}{\left(2n\right)!z^{2n+k}}\right]
\end{equation}
for $|z|\to\infty$ and $|\arg z|<\pi$, and where the $B_{2n}$ are
the Bernoulli numbers. We deduce from \eqref{e2} that the
variances of $\Re \log Z_{N}$ and $\Im\log Z_{N}$ (which are centered)
are finite and both asymptotic to $\frac{1}{2}\log N$ as
$N\to\infty$.

Now we state the central limit theorem we shall apply (we follow
page 87 of Strook \cite{Stroock}). We assume that
$\left(X_{n}\right)$ is a sequence of independent, square
integrable $\mathbb{R}^{\ell}$ valued random variables defined on
the same probability space. Further we will assume that $X_{n}$
has mean $0$ and strictly positive covariance
$\cov\left(X_{n}\right)$. Finally we set:
$$S_{n}=\sum_{m=1}^{n}X_{m},\quad C_{n}:= \cov\left(S_{n}\right)=\sum_{m=1}^{n}\cov\left(X_{m}\right)$$and
$$\Sigma_{n}=\left(\det\left(C_{n}\right)\right)^{\frac{1}{2\ell}}\quad \text{and } \widehat{S}_{n}=\frac{S_{n}}{\Sigma_{n}}.$$

\begin{thm}[Multidimensional Central Limit Theorem, \cite{Stroock} p. 88]\label{MCLT}
Assume that $$A:= \lim_{n\to\infty}\frac{C_{n}}{\Sigma_{n}^{2}}$$exists and that
\begin{equation}\label{C1}
\lim_{n\to\infty}\frac{1}{\Sigma_{n}^{2}}\sum_{m=1}^{n}\E\left[|X_{m}|^{2}\I_{|X_{m}|\geq\varepsilon\Sigma_{n}}\right]=0
\end{equation}for every $\varepsilon>0$. Then the vector $\widehat{S}_{n}$ converges in law to a Gaussian vector with mean $0$ and covariance matrix $A$.
\end{thm}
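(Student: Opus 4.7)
The plan is to reduce the multidimensional statement to its one-dimensional counterpart via the Cramér–Wold device, and then to apply the classical Lindeberg–Feller theorem. Recall that $\widehat{S}_{n}$ converges in law to a centered Gaussian vector with covariance $A$ if and only if, for every fixed $\xi\in\RR^{\ell}$, the scalar random variable $\langle\xi,\widehat{S}_{n}\rangle$ converges in law to a centered real Gaussian of variance $\xi^{T}A\,\xi$. So the entire statement collapses to a one-dimensional CLT in each direction $\xi$.

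Fix such a $\xi$ and form the one-dimensional triangular array $Y_{m,n}:=\langle\xi,X_{m}\rangle/\Sigma_{n}$, $1\leq m\leq n$. Its entries are independent, centered, and square-integrable, with
$$\sum_{m=1}^{n}\E[Y_{m,n}^{2}]=\xi^{T}\!\left(C_{n}/\Sigma_{n}^{2}\right)\!\xi\longrightarrow\xi^{T}A\,\xi$$
by the assumption on $C_{n}/\Sigma_{n}^{2}$. So the variances already behave as they should; the only thing left is to verify the scalar Lindeberg condition for $\{Y_{m,n}\}$. Using the elementary inequality $|\langle\xi,X_{m}\rangle|\leq|\xi|\,|X_{m}|$, for $\xi\neq 0$ the event $\{|Y_{m,n}|\geq\varepsilon\}$ is contained in $\{|X_{m}|\geq\varepsilon\Sigma_{n}/|\xi|\}$, so
$$\sum_{m=1}^{n}\E\bigl[Y_{m,n}^{2}\,\I_{|Y_{m,n}|\geq\varepsilon}\bigr]\;\leq\;\frac{|\xi|^{2}}{\Sigma_{n}^{2}}\sum_{m=1}^{n}\E\bigl[|X_{m}|^{2}\,\I_{|X_{m}|\geq\varepsilon\Sigma_{n}/|\xi|}\bigr],$$
which tends to $0$ by the assumed vector Lindeberg condition \eqref{C1}. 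Applying Lindeberg–Feller in one dimension gives $\langle\xi,\widehat{S}_{n}\rangle\convlaw\mathcal{N}(0,\xi^{T}A\,\xi)$, and then Cramér–Wold finishes the argument.

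The main subtlety, more than a genuine obstacle, is that the normalization is by $\Sigma_{n}=(\det C_{n})^{1/(2\ell)}$ rather than by a scalar gauge of $C_{n}$ such as $\sqrt{\mathrm{tr}\,C_{n}}$ or $\sqrt{n}$. This choice is what makes everything coherent: by construction $\det(C_{n}/\Sigma_{n}^{2})=1$ for every $n$, so continuity of the determinant forces $\det A=1$. In particular $A$ is automatically positive definite and non-degenerate, the limiting Gaussian is genuinely $\ell$-dimensional, and the Cramér–Wold step yields non-trivial information in every direction $\xi$. This is the only place where one has to be careful; the rest of the proof is the one-dimensional Lindeberg–Feller theorem applied once for each $\xi$.
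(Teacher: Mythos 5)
The paper does not prove this theorem: it is quoted verbatim from Stroock's textbook (page 88) and used as a black box, so there is no in-paper argument to compare against. Your proof is therefore a welcome supplement, and it is correct. The route you take --- Cram\'er--Wold reduction to a one-dimensional triangular array $Y_{m,n}=\langle\xi,X_m\rangle/\Sigma_n$, control of the total variance via $\xi^T(C_n/\Sigma_n^2)\xi\to\xi^T A\xi$, and verification of the scalar Lindeberg condition from the vector one via $|\langle\xi,X_m\rangle|\le|\xi|\,|X_m|$ --- is the standard modern proof and is cleanly executed; the event inclusion $\{|Y_{m,n}|\ge\varepsilon\}\subseteq\{|X_m|\ge\varepsilon\Sigma_n/|\xi|\}$ and the rescaling $\varepsilon\mapsto\varepsilon/|\xi|$ in hypothesis \eqref{C1} are exactly right. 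Stroock's own proof proceeds somewhat differently (a direct multidimensional Lindeberg replacement argument with Taylor expansion of test functions), so you are getting the result by a genuinely different and arguably more elementary path, at the cost of invoking Cram\'er--Wold and the scalar Lindeberg--Feller theorem as prerequisites. One small remark: your observation that $\det(C_n/\Sigma_n^2)\equiv 1$ forces $\det A=1$, hence $A$ positive definite, is a nice structural point, but it is not actually needed for the Cram\'er--Wold step to go through --- if $\xi^T A\xi=0$ in some direction, Lindeberg--Feller (or just Chebyshev) still gives $\langle\xi,\widehat S_n\rangle\to 0$ in probability, which is the degenerate Gaussian. So the non-degeneracy is a bonus conclusion, not a hypothesis your argument relies on; it would be worth phrasing it that way to avoid the impression that the proof would break down otherwise.
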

Now we can prove Theorem 4.1 :

$$\frac{Z_N}{\sqrt{\frac{1}{2}\log N}}
\convlaw \mathcal{N}_1+ \ii \mathcal{N}_2.$$

Indeed, let us consider the Lyapounov exponents associated with
$(T_{n})$ and $(W_{n})$:
\[
L_{N}=\frac{1}{s_{N}^{3/2}}\sum_{n=1}^{N}\E\left[|T_{n}|^{3}\right],
\]
and
\[
L'_{N}=\frac{1}{\sigma_{N}^{3/2}}\sum_{n=1}^{N}\E\left[|W_{n}|^{3}\right],
\]
where $s_{N}^{2}=\sum_{j=1}^{N}\E\left[T_{j}^{2}\right]$ and
$\sigma_{N}^{2}=\sum_{j=1}^{N}\E\left[W_{j}^{2}\right]$. From the
expressions of the cumulants, we have:
\[
s_{N}^{2}=\sigma_{N}^{2}=\frac{1}{2}\sum_{j=1}^{N}\psi'\left(j\right)\sim
\frac{1}{2}\log N.
\]
It is not hard to see, using the expression for the cumulants or
the density of the beta variables and the $W_{j}$, that the series
$\sum_{n=1}^{\infty}\E\left[|T_{n}|^{3}\right]$ and
$\sum_{n=1}^{\infty}\E\left[|W_{n}|^{3}\right]$ both converge.
Hence $L_{N}\to 0$ and $L'_{N}\to 0$ as $N\to\infty$, and
consequently \eqref{C1} holds and the result follows from an
application of the Multidimensional Central Limit Theorem of
Theorem~\ref{MCLT}.

\section{Iterated logarithm law} \label{section:ILL}

In this Section, we give some iterated logarithm law for both the
real and imaginary parts of the characteristic polynomial. Again,
this can be done due to the decomposition given in
Theorem~\ref{thmfondamentaldedecomposition}.

We first need some information about the rate of convergence in
the central limit theorem.

\subsection{Further results about the rate of convergence}
\label{subsection:RateConv}

With the representation in
Theorem~\ref{thmfondamentaldedecomposition} it is possible to
obtain uniform and non-uniform estimates on the rate of
convergence in the central limit theorem, using the Berry-Essen's
inequalities (see \cite{Petrov95} or \cite{ProkStat}):
\begin{thm}\label{berryessenineq}
Let $X_{1},\ldots,X_{n}$ be independent random variables such that
$\E\left[X_{j}\right]=0$, and $\E\left[|X_{j}|^{3}\right]<\infty$.
Put $\sigma_{j}^{2}=\E\left[X_{j}^{2}\right]$;
$B_{n}=\sum_{j=1}^{N}\sigma_{j}^{2}$;
$F_{n}\left(x\right)=\mathbb{P}\left[B_{n}^{-1/2}\sum_{j=1}^{n}X_{j}\leq
x\right]$ and
\[
L_{n}=\frac{1}{B_{n}^{3/2}}\sum_{j=1}^{n}\E\left[|X_{j}|^{3}\right].
\]
Then there exist two constants $A$ and $C$ not depending on $n$
such that the following uniform and non uniform estimates hold:
\begin{equation}
\sup_{x}|F_{n}\left(x\right)-\Phi\left(x\right)|\leq A L_{n}
\end{equation}and
\begin{equation}
|F_{n}\left(x\right)-\Phi\left(x\right)|\leq\frac{CL_{n}}{\left(1+|x|\right)^{3}}.
\end{equation}
\end{thm}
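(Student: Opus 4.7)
Since the statement is the classical Berry-Esseen inequality (both the uniform and the non-uniform version), I would not attempt an independent derivation but recall the Fourier-analytic route due to Esseen, which is the approach followed in the cited monographs. The cornerstone is Esseen's smoothing inequality: for any distribution function $F$ with characteristic function $f$ and any $T>0$,
\[
\sup_x |F(x)-\Phi(x)| \leq \frac{1}{\pi}\int_{-T}^{T}\left|\frac{f(t)-e^{-t^{2}/2}}{t}\right|\dd t + \frac{C}{T},
\]
with $C$ an absolute constant. I would apply this to $F_{n}$, whose characteristic function factors as $f(t)=\prod_{j=1}^{n}\varphi_{j}(t/\sqrt{B_{n}})$ with $\varphi_{j}(u)=\E[e^{\ii u X_{j}}]$.

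The first main step is to control $|f(t)-e^{-t^{2}/2}|$ on a well-chosen interval. Using the Taylor expansion $\varphi_{j}(u)=1-\tfrac12\sigma_{j}^{2}u^{2}+\theta_{j}(u)$ with $|\theta_{j}(u)|\leq \tfrac16|u|^{3}\E|X_{j}|^{3}$, taking logarithms (justified on a suitable range since $|\varphi_{j}-1|<1/2$), and summing in $j$, one obtains
\[
\log f(t) = -\tfrac12 t^{2} + O\!\left(|t|^{3}L_{n}\right)
\]
uniformly for $|t|\leq c/L_{n}$, where $c$ is an absolute constant. Exponentiating, using $|e^{a}-e^{b}|\leq |a-b|\max(e^{\Re a},e^{\Re b})$, and choosing $T$ of order $1/L_{n}$, the integral in Esseen's inequality contributes $O(L_{n})$, and the boundary term $C/T$ is also $O(L_{n})$, which yields the uniform estimate with some universal $A$.

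For the non-uniform estimate the argument is substantially more delicate. The standard route is to truncate each summand at level proportional to $\sqrt{B_{n}}(1+|x|)$, apply the uniform bound just obtained to the truncated (centred) variables, whose third moments remain controlled by $\E|X_{j}|^{3}$, and then absorb the truncation error through Markov's inequality against $\E|X_{j}|^{3}$; the $(1+|x|)^{-3}$ decay arises from a single division by $(1+|x|)^{3}$ in that Markov step, combined with the decay of $1-\Phi(x)$ for large $|x|$. A bootstrap tying the two estimates together then produces the stated non-uniform bound with a universal constant $C$; the details are carried out in Petrov \cite{Petrov95} and in Prokhorov-Statulevicius \cite{ProkStat}.

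The hard part is this non-uniform refinement: the truncation level must be tuned so that neither the truncation error nor the central Gaussian estimate dominates, and carrying constants through the bootstrap requires some care. The uniform part, by contrast, is essentially a routine application of Esseen's smoothing lemma once the characteristic-function expansion above is in place.
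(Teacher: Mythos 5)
The paper does not prove this theorem at all: it is quoted verbatim as a classical result, with the proof delegated to the monographs cited (\cite{Petrov95}, \cite{ProkStat}). Your sketch of the standard Esseen smoothing-lemma argument for the uniform bound and the truncation/bootstrap argument for the non-uniform bound is accurate and is indeed the route taken in those references, so your treatment is consistent with — and somewhat more informative than — what the paper itself provides.
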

Now, applying the above theorem to the variables
$\left(T_{j}\right)$ and $\left(W_{j}\right)$ we obtain:
\begin{prop}
The following estimates on the rate of
convergence in the central limit theorem  for the real and
imaginary parts of the characteristic polynomial hold:
\begin{eqnarray}
\left\vert\mathbb{P}\left[\frac{\Re \log
Z_{N}}{\sqrt{\frac{1}{2}\log N}}\leq
x\right]-\Phi\left(x\right)\right\vert
&\leq& \frac{C}{\left(\log N\right)^{3/2}\left(1+|x|\right)^{3}} \\
\left\vert\mathbb{P}\left[\frac{\Im \log
Z_{N}}{\sqrt{\frac{1}{2}\log N}}\leq
x\right]-\Phi\left(x\right)\right\vert &\leq& \frac{C}{\left(\log
N\right)^{3/2}\left(1+|x|\right)^{3}}
\end{eqnarray}
where $C$ is a constant.
\end{prop}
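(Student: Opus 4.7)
The plan is to apply the non-uniform Berry-Esseen inequality (the second estimate in Theorem~\ref{berryessenineq}) to each of the two independent decompositions from Theorem~\ref{thmfondamentaldedecomposition}, namely $\log|Z_N| \law \sum_{j=1}^N T_j$ and $\Im\log Z_N \law \sum_{j=1}^N W_j$. Both sums are of independent centered random variables, since the first cumulants $Q_{j,1}$ and $R_{j,1}$ vanish in the formulas recalled in Section~\ref{subsection:CLTSecondProof}.

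The first step is to identify the normalising variance. For the real part, $B_N := s_N^2 = \sum_{j=1}^N \E[T_j^2] = \frac{1}{2}\sum_{j=1}^N \psi'(j)$; the polygamma expansion \eqref{e3} gives $\psi'(j) = 1/j + O(1/j^2)$, so $B_N = \frac{1}{2}\log N + O(1)$ and hence $B_N^{3/2} \sim (\frac{1}{2}\log N)^{3/2}$. The same holds with $\sigma_N^2$ for the imaginary part. The second step is to control the numerator of the Lyapunov ratio: the convergence of the series $\sum_{j=1}^\infty \E[|T_j|^3]$ and $\sum_{j=1}^\infty \E[|W_j|^3]$ already invoked in Section~\ref{subsection:CLTSecondProof} implies that the partial sums are bounded by a constant $K$, so the Lyapunov ratio satisfies $L_N = B_N^{-3/2}\sum_{j=1}^N \E[|T_j|^3] = O((\log N)^{-3/2})$, and similarly for the $W_j$. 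Feeding this into the non-uniform Berry-Esseen estimate gives exactly the two claimed bounds, with constant $C$ absorbing the Berry-Esseen constant, the bound $K$ on the third-moment sum, and the factor $2^{3/2}$ coming from $B_N \sim \frac{1}{2}\log N$.

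The main obstacle is to justify the convergence of $\sum_j \E[|T_j|^3]$. For $W_j$ this is straightforward: the density $K_j\cos^{2(j-1)}(v)$ concentrates near $0$ at scale $j^{-1/2}$, so a direct computation yields $\E[|W_j|^3] = O(j^{-3/2})$. For $T_j = \log(2\beta_{j,j-1}\cos W_j)$ the density is less transparent, but the cumulant formulas $Q_{j,k} = (1 - 2^{1-k})\psi^{(k-1)}(j) = O(j^{1-k})$ together with the standard cumulant-to-moment conversion give the centered fourth moment $\E[T_j^4] = 3(Q_{j,2})^2 + Q_{j,4} = O(j^{-2})$; H\"older's inequality then yields $\E[|T_j|^3] \leq \E[T_j^4]^{3/4} = O(j^{-3/2})$, and the series converges.
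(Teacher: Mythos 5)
Your approach is the same as the paper's (which is extremely terse here: it simply states the Berry--Esseen theorem and says ``applying the above theorem to $(T_j)$ and $(W_j)$ we obtain'' the proposition). Your added justification that $\sum_j \E[|T_j|^3]$ and $\sum_j \E[|W_j|^3]$ converge is sound: the Gaussian-scale concentration of $W_j$ and the cumulant-to-moment identity $\E[T_j^4]=3Q_{j,2}^2+Q_{j,4}=O(j^{-2})$ combined with H\"older give $\E[|T_j|^3]=O(j^{-3/2})$, as you say.

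However, there is a genuine gap, which the paper's one-line proof also slides over. Theorem~\ref{berryessenineq} controls $F_N(x)=\Prob[S_N/\sqrt{B_N}\leq x]$, normalised by the \emph{exact} standard deviation $\sqrt{B_N}$ with $B_N=s_N^2=\tfrac12\sum_{j=1}^N\psi'(j)$. The Proposition is stated with the normalisation $\sqrt{\tfrac12\log N}$. These differ by a nonvanishing constant: since $\psi'(j)=\sum_{k\geq j}k^{-2}$, one has $\sum_{j=1}^N\psi'(j)=\log N+\gamma+1+o(1)$, so $B_N-\tfrac12\log N\to\tfrac{\gamma+1}{2}\neq 0$. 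Writing $x'=x\sqrt{\tfrac12\log N}/\sqrt{B_N}=x(1-c/\log N+o(1/\log N))$, the passage from one normalisation to the other introduces the extra term $|\Phi(x')-\Phi(x)|\asymp |x|\phi(x)/\log N$, which for $|x|$ of order $1$ is $\asymp 1/\log N$ and thus \emph{larger} than the claimed bound $C/(\log N)^{3/2}(1+|x|)^3$. Your remark that ``$B_N\sim\tfrac12\log N$'' contributes only ``a factor $2^{3/2}$'' is therefore not enough: the asymptotic equivalence controls the denominator $B_N^{3/2}$ in $L_N$ but does not license the change of scaling inside $F_N$. The argument you give (like the paper's) actually proves the estimate with $\sqrt{B_N}$ in the normalisation, i.e.
\[
\left|\Prob\!\left[\frac{\Re\log Z_N}{\sqrt{B_N}}\leq x\right]-\Phi(x)\right|\leq\frac{C}{(\log N)^{3/2}(1+|x|)^3},
\]
and similarly for the imaginary part; to obtain the Proposition as stated one must either replace $\sqrt{\tfrac12\log N}$ by $\sqrt{B_N}$ (equivalently $s_N$ or $\sigma_N$), or weaken the rate to $O(1/\log N)$, or supply an additional argument handling the shift in normalisation.
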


\subsection{An iterated logarithm law}
We first state a theorem of Petrov, \cite{Petrov66, Petrov99}.
\begin{thm}[Petrov] \label{lilpetrov}
Let $X_{1},X_{2},\ldots$ be independent random variables such that
$\E\left[X_{j}\right]=0$, and
$\sigma_{j}^{2}=\E\left[X_{j}^{2}\right]<\infty$. Set
$B_{n}=\sum_{j=1}^{N}\sigma_{j}^{2}$;
$F_{n}\left(x\right)=\mathbb{P}\left[B_{n}^{-1/2}\sum_{j=1}^{n}X_{j}\leq
x\right]$ and
$\Phi(x)=\frac{1}{\sqrt{2\pi}}\int_{-\infty}^{x}e^{-t^{2}/2}\dd
t$. If the conditions
\begin{enumerate}
  \item $B_{n}\to\infty$;
  \item  $\frac{B_{n+1}}{B_{n}}\to 1$;
  \item $\sup_{x}|F_{n}(x)-\Phi(x)|=\mathcal{O}\left(\left(\log B_{n}\right)^{-1-\delta}\right)$,
\end{enumerate} are satisfied for some $\delta>0$, then
\begin{equation}
\limsup\frac{S_{n}}{\sqrt{2B_{n}\log\log B_{n}}}=1\;\; \text{a.s.}
\end{equation}
\end{thm}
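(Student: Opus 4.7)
The plan is to derive a classical law of the iterated logarithm from a Berry--Esseen-type hypothesis by the Borel--Cantelli method, comparing the standardized sums to a Gaussian on a geometrically growing subsequence and then using conditions (1) and (2) to fill gaps. Both the upper bound $\limsup S_n/\sqrt{2B_n\log\log B_n}\le 1$ a.s.\ and the matching lower bound are needed.

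First I would use conditions (1) and (2) to choose integers $n_k$ with $B_{n_k}\sim\theta^k$ for a parameter $\theta>1$ to be tuned later. For the upper bound, I combine condition (3) with the standard Gaussian tail $1-\Phi(x)\le \frac{1}{x\sqrt{2\pi}}e^{-x^2/2}$ to get, for any $\varepsilon>0$,
\[
\Prob\!\left(S_{n_k}\ge (1+\varepsilon)\sqrt{2B_{n_k}\log\log B_{n_k}}\right)
\;\ll\; (\log B_{n_k})^{-(1+\varepsilon)^2} \;+\; (\log B_{n_k})^{-1-\delta},
\]
which is summable in $k$. The first Borel--Cantelli lemma rules out exceedance along the subsequence, and a L\'evy-type maximal inequality applied on the blocks $(n_k,n_{k+1}]$ transfers the bound to all $n$; condition (2) guarantees that one can let $\theta\downarrow 1$ so that the interpolation loss vanishes.

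For the lower bound, the key is independence of the block increments $D_k:=S_{n_{k+1}}-S_{n_k}$, whose variances satisfy $\mathrm{Var}(D_k)=B_{n_{k+1}}-B_{n_k}\sim (1-\theta^{-1})B_{n_{k+1}}$ by geometric growth. Applying condition (3) to both $S_{n_{k+1}}$ and $S_{n_k}$ yields a Berry--Esseen bound for $D_k/\sqrt{\mathrm{Var}(D_k)}$, and together with the matching lower Gaussian tail $1-\Phi(x)\ge c\,e^{-x^2/2}/x$ one obtains
\[
\Prob\!\left(D_k\ge (1-\varepsilon)\sqrt{2\,\mathrm{Var}(D_k)\log\log B_{n_{k+1}}}\right)
\;\gg\; (\log B_{n_{k+1}})^{-(1-\varepsilon)^2}.
\]
The right-hand side is not summable, so the second Borel--Cantelli lemma (independent events) shows that these exceedances occur infinitely often. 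Combining with the upper bound already established for $|S_{n_k}|$, and letting $\varepsilon\downarrow 0$ and $\theta\to\infty$, gives $\limsup\ge 1$ a.s.

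The main obstacle is calibrating the Berry--Esseen error against the Gaussian tail at the critical height $x\sim\sqrt{2\log\log B_n}$: the Gaussian tail is of order $(\log B_n)^{-1}$ up to logarithmic factors, so the hypothesis must beat this with strict room to spare, which is exactly why the exponent $-1-\delta$ (rather than merely $-1$) in condition (3) is essential --- otherwise the error would swamp the sharp tail one is trying to extract. A secondary delicate point is the maximal-inequality step in the upper bound: since the $X_j$ need not be identically distributed one cannot invoke a Kolmogorov inequality for i.i.d.\ variables directly, but a L\'evy-type symmetrization bound applied block-by-block, combined with condition (2), suffices once $\theta$ is taken close to $1$.
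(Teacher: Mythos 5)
The paper does not prove this theorem; it is imported wholesale from Petrov's papers \cite{Petrov66, Petrov99} as a black-box tool, so there is no ``paper's own proof'' against which to compare. Your sketch is therefore being judged on its own merits, and the overall architecture --- geometric blocking via $B_{n_k}\sim\theta^k$, first Borel--Cantelli plus a maximal inequality for the upper bound with $\theta\downarrow1$, second Borel--Cantelli on independent block increments for the lower bound with $\theta\to\infty$ --- is indeed the classical Hartman--Wintner--Kolmogorov scheme, and your remark on why the exponent $-1-\delta$ (rather than $-1$) is forced by summability in the Borel--Cantelli step is exactly right.

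There is, however, a genuine gap in the lower-bound argument. You write that ``applying condition (3) to both $S_{n_{k+1}}$ and $S_{n_k}$ yields a Berry--Esseen bound for $D_k/\sqrt{\mathrm{Var}(D_k)}$.'' This does not follow: Kolmogorov-distance control on $S_{n_{k+1}}$ and $S_{n_k}$ does \emph{not} yield Kolmogorov-distance control on $D_k$, because deconvolution does not preserve the sup-norm on distribution functions (a smooth Gaussian-like $S_{n_{k+1}}$ can arise as the sum of a small-variance $S_{n_k}$ and a $D_k$ whose distribution function is badly non-Gaussian). Condition (3) is a hypothesis on the sums $S_n$ starting from index $1$, and cannot be transferred to arbitrary partial blocks. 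The standard repair, and presumably what Petrov actually does, avoids any Berry--Esseen statement for $D_k$ and instead lower-bounds its tail directly from the inclusion $\{S_{n_{k+1}}\ge x+y\}\cap\{S_{n_k}\le y\}\subset\{D_k\ge x\}$, which gives
\begin{equation*}
\Prob\bigl(D_k\ge x\bigr)\;\ge\;\Prob\bigl(S_{n_{k+1}}\ge x+y\bigr)-\Prob\bigl(S_{n_k}>y\bigr).
\end{equation*}
Taking $x=(1-\varepsilon)\sqrt{2\,\mathrm{Var}(D_k)\log\log B_{n_{k+1}}}$ and $y$ of order $\sqrt{2B_{n_k}\log\log B_{n_k}}$, and noting that $\mathrm{Var}(D_k)\sim(1-\theta^{-1})B_{n_{k+1}}$ while $B_{n_k}\sim\theta^{-1}B_{n_{k+1}}$, one finds that for $\theta$ large the shifted threshold $(x+y)/\sqrt{B_{n_{k+1}}}$ is still below $\sqrt{2\log\log B_{n_{k+1}}}$; then condition (3) applied to $S_{n_{k+1}}$ gives the first term of order $(\log B_{n_{k+1}})^{-(1-\varepsilon')^2}$, condition (3) applied to $S_{n_k}$ (together with Chebyshev if one prefers) shows the subtracted term is of strictly smaller order, and the conclusion you wanted follows, now legitimately, with no Berry--Esseen bound on $D_k$ ever invoked. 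With this replacement the rest of your outline (second Borel--Cantelli over independent $D_k$, combining with the already-proved a.s.\ bound on $|S_{n_k}|$, then $\varepsilon\downarrow0$, $\theta\to\infty$) goes through.
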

\begin{rem}
If the conditions of the theorem are satisfied, then we also have:
$$\liminf\frac{S_{n}}{\sqrt{2B_{n}\log\log B_{n}}}=-1\;\; \text{a.s.}$$
\end{rem}

Before using Theorem~\ref{lilpetrov} for the real and imaginary
parts of $\log Z_N$, we need to give the explicit meaning of the
``almost sure convergence'' for matrices with different sizes.

Imagine that in  Proposition~\ref{prop:HaarDecomp} we choose for
$M$ the symmetry which transforms $e_1$ (the first vector of the
basis) into $M_1$, a random vector of $\S_\CC^{n+1}$. Consider the
set $O=\S_\CC^1\times\S_\CC^2\times\S_\CC^3\dots$ endowed with the
measure $\nu_1\times\nu_2\times\nu_3\dots$, where $\nu_k$ is the
uniform measure on the sphere $\S_\CC^k$ (this can be a
probability measure by defining the measure of a set as the
limiting measure of the finite-dimensional cylinders). Consider
the application $f$ which transforms $\omega\in O$ into an element
of $U(1)\times U(2)\times U(3)\dots$ with successive iterations of
the Proposition~\ref{prop:HaarDecomp}. Then $\Omega=\Im(f)$ is naturally endowed with a
probability measure $\mu_U=\Im(\nu)$, and the marginal
distribution of $\mu_U$ on the $k$\textsuperscript{th} coordinate
is the Haar measures on $U(k)$.

Let $g$ be a function of a unitary matrix $U$, no matter the size
of $U$ (e.g. $g=\det(Id-U)$). The introduction of the set $\Omega$
with measure $\mu_U$ allows us to define the ``almost sure''
convergence of $(g(U_k),k\geq 0)$, where $(U_k)_{k\geq
0}\in\Omega$. This is, for instance, the sense of the ``a.s'' in
the following iterated logarithm law.

\begin{prop}
The following almost sure convergence (defined previously) holds :
\begin{eqnarray}
\limsup\frac{\Re \log Z_N}{\sqrt{\log N\log\log\log N}} &=& 1, \\
\limsup\frac{\Im \log Z_N}{\sqrt{\log N\log\log\log N}} &=& 1.
\end{eqnarray}
\end{prop}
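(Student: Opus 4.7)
The plan is to view both iterated logarithm statements as applications of Petrov's theorem (Theorem 5.3) to the sum representations in Theorem 3.3, once everything has been put on a single probability space.

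First I would fix the coupling. Using the construction at the start of Section 5.2, realize the whole family $(V_N)_{N\ge 1}$ on the product space $\Omega$ with measure $\mu_U$ so that $V_{N+1}$ is built from $V_N$ by the recursion of Proposition 2.2; the same construction, applied together with Theorem 3.3, produces independent random variables $(W_j,\beta_{j,j-1})_{j\ge 1}$ on this common space with the $W_j$'s having density $\sigma_{2(j-1)}$ and $T_j := \log(\beta_{j,j-1}\,2\cos W_j)$, so that
\[
\Im \log Z_N = \sum_{j=1}^N W_j, \qquad \Re \log Z_N = \log|Z_N| = \sum_{j=1}^N T_j \qquad \text{a.s.}
\]
In this way the $\limsup$'s in the proposition become genuine almost sure statements about partial sums of independent, centered random variables, to which Petrov's theorem may be applied separately.

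Next I would verify the three hypotheses of Theorem 5.3 for each of the sequences $(T_j)$ and $(W_j)$. From the cumulant computations in Section 4.2 we have
\[
B_N := \sum_{j=1}^N \E[T_j^2] = \sum_{j=1}^N \E[W_j^2] = \frac{1}{2}\sum_{j=1}^N \psi'(j) \sim \frac{1}{2}\log N,
\]
so $B_N\to\infty$ and, since $\psi'(N+1)=O(1/N)$, also $B_{N+1}/B_N\to 1$. The Berry--Esseen-type estimate of Proposition 5.2 gives the uniform bound
\[
\sup_x \bigl|F_N(x)-\Phi(x)\bigr| = O\bigl((\log N)^{-3/2}\bigr)
\]
for both the real and the imaginary part; since $\log B_N \sim \log\log N$, this is $O((\log B_N)^{-1-\delta})$ for every $\delta\in(0,1/2)$, which is exactly condition (3) of Theorem 5.3. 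So Petrov's theorem (together with the remark that follows it, which also gives the matching $\liminf=-1$) yields
\[
\limsup_{N\to\infty}\frac{\sum_{j=1}^N T_j}{\sqrt{2B_N\log\log B_N}} = 1, \qquad \limsup_{N\to\infty}\frac{\sum_{j=1}^N W_j}{\sqrt{2B_N\log\log B_N}} = 1 \quad \text{a.s.}
\]

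Finally I would replace the normalization $\sqrt{2B_N\log\log B_N}$ by the one in the statement. Since $B_N\sim \tfrac12\log N$, we have $\log\log B_N = \log\bigl(\log\log N + O(1)\bigr)\sim \log\log\log N$, and therefore
\[
\sqrt{2B_N\log\log B_N}\; \sim\; \sqrt{\log N\,\log\log\log N},
\]
which immediately gives the two claimed identities. The only genuinely delicate point is the first paragraph, namely producing the coupling in which $\log Z_N$ is literally the $N$-th partial sum of independent variables; once that is in place, the rest is a routine verification of Petrov's hypotheses using the rate of convergence already established in Proposition 5.2 and the variance asymptotics from Section 4.2.
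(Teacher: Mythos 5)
Correct, and this is exactly the argument the paper sets up: the coupling on $\Omega=\prod_k\S_\CC^k$, Petrov's Theorem~\ref{lilpetrov}, and the Berry--Esseen rates of Proposition~5.2 are all placed immediately before the proposition precisely so that the reader can combine them as you do, and the normalization $\sqrt{2B_N\log\log B_N}\sim\sqrt{\log N\,\log\log\log N}$ is handled correctly. The one point worth making explicit is the sentence where you say the construction ``produces'' $(W_j,\beta_{j,j-1})$: the coupling built from Proposition~\ref{prop:HaarDecomp} hands you the independent increments $\log\bigl(1-M_{11}^{(j)}\bigr)$ of $\log Z_N$, not the $W_j$, $\beta_{j,j-1}$ themselves; but since Lemma~\ref{lem:MellinFourier} and the lemma of Section~3.2 show $1-M_{11}^{(j)}$ and $\beta_{j,j-1}\,2\cos W_j\,e^{\ii W_j}$ have the same Mellin--Fourier transform, you can set $W_j:=\arg\bigl(1-M_{11}^{(j)}\bigr)$ and $\beta_{j,j-1}:=|1-M_{11}^{(j)}|/(2\cos W_j)$ to obtain variables with exactly the required joint law and independence structure, after which $\Re\log Z_N=\sum T_j$ and $\Im\log Z_N=\sum W_j$ do hold almost surely on $\Omega$ and Petrov's hypotheses (which depend only on the laws of the summands) are verified as you state.
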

\begin{rem}
The representations in law as sums of independent random variables we have obtained could as well be used to obtain all sorts of refined large and moderate deviations  estimates for the characteristic polynomial. 
\end{rem}

\section{Same results in the orthogonal case.}
\label{section:OrthogGroup}

The Mellin Fourier transform for $Z:=\det(I_N-M)$  found in
\cite{KeaSna} and \cite{KeaSna2} by Keating and Snaith, using the
Selberg integrals, are, for the unitary group $U(N)$
\begin{equation}\label{eq:7.1}
\E\left[|Z|^t e^{\ii s \arg
Z}\right]=\prod_{k=1}^N\frac{\Gamma\left(k\right)\Gamma\left(k+t\right)}{\Gamma\left(k+\frac{t+s}{2}\right)\Gamma\left(k+\frac{t-s}{2}\right)};
\end{equation}
and for the special orthogonal group $SO(2N)$
\begin{equation}\label{eq:7.2}
\E\left[Z^t\right]=2^{2Nt}\prod_{k=1}^N\frac{\Gamma\left(N+k-1\right)\Gamma\left(t+k-\frac{1}{2}\right)}{\Gamma\left(k-\frac{1}{2}\right)
\Gamma\left(t+k+N-1\right)}.
\end{equation}

Formula~\eqref{eq:7.1} was directly proven in
section~\ref{section:Haar}. Here we show that such a probabilistic
proof still holds for formula~\eqref{eq:7.2}.

Let
\[
\S_\RR^N:=\{(r_1,\dots,r_N)\in\RR^N : |r_1|^2+\dots+|r_N|^2=1\}
\]
and $\mu_{O(N)}$ be the Haar measure on $O(N)$. Then in analogy to
Proposition~\ref{prop:HaarDecomp}, we have:

\begin{prop}
Let $M\in O(N+1)$ be chosen such that its first column $M_1$ is
uniformly distributed on $\S_\RR^{N+1}$. Let $O_N\in O(N)$ be
chosen independently of $M$ according to the Haar measure
$\mu_{O(N)}$. Then the matrix
$$O_{N+1}:=M \mat{1}{0}{0}{O_N}
$$
is distributed with the Haar measure $\mu_{O(N+1)}$.
\end{prop}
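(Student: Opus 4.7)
The plan is to mimic the proof of Proposition~\ref{prop:HaarDecomp} verbatim, replacing the complex sphere by the real sphere and the unitary group by the orthogonal group. The key observations that make the argument carry over are: (i) the uniform measure on $\S_\RR^{N+1}$ is invariant under the natural action of $O(N+1)$; (ii) the Haar measure on $O(N)$ is left-invariant; (iii) an orthogonal matrix mapping a unit vector to another unit vector always exists. None of the steps in the unitary proof used anything stronger than these three facts.

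In detail: by the uniqueness of Haar measure on $O(N+1)$, it is enough to show that for every fixed $U\in O(N+1)$ one has
\[
U M \mat{1}{0}{0}{O_N} \law M \mat{1}{0}{0}{O_N}.
\]
First I would write, as in the unitary case, $UM = (P_1 \| \tilde P)$, where $P_1 := U M_1$ is again uniformly distributed on $\S_\RR^{N+1}$ (because $U\in O(N+1)$ preserves the uniform measure on the real sphere) and $\tilde P$ is a suitable basis of the orthogonal complement of $P_1$. Next, by conditioning on the common value $M_1=P_1=v$ for a fixed $v\in \S_\RR^{N+1}$, the problem reduces to showing
\[
(v\|P') \mat{1}{0}{0}{O_N} \law (v\|M') \mat{1}{0}{0}{O_N},
\]
where $M',P'$ have the conditional laws of $\tilde M,\tilde P$ given the first column equals $v$, and both are independent of $O_N$.

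Now I would pick any $A\in O(N+1)$ with $A v = e_1$ (such an $A$ exists by extending $v$ to an orthonormal basis, a construction available over $\RR$), and left-multiply by $A$. Since $A$ is invertible, this reduces the problem to showing
\[
P'' O_N \law M'' O_N,
\]
for some $P'',M'' \in O(N)$ which are independent of $O_N$. Conditioning on $P''$ (respectively $M''$) and using the left invariance of $\mu_{O(N)}$ yields $P'' O_N \law O_N \law M'' O_N$, which is the desired identity.

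The only step that could conceivably fail for the orthogonal group, and which is worth checking carefully, is the existence of a uniformly random first column: one must verify that there actually exist elements $M\in O(N+1)$ whose first column is uniform on $\S_\RR^{N+1}$, which is straightforward (e.g.\ take a Householder reflection exchanging $e_1$ and a uniform $v\in\S_\RR^{N+1}$, or simply sample $M$ itself from $\mu_{O(N+1)}$). Once this is noted, the proof is a transcription of the one given for $U(N+1)$, so I would simply refer to that proof and indicate the replacements $\CC\leadsto\RR$, $\S_\CC\leadsto\S_\RR$, $U(\cdot)\leadsto O(\cdot)$, rather than reproducing it in full.
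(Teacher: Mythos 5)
Your proposal is correct and coincides with the paper's own treatment: the authors also simply assert that the proof of Proposition~\ref{prop:HaarDecomp} carries over verbatim with $\CC\leadsto\RR$, $\S_\CC\leadsto\S_\RR$, $U(\cdot)\leadsto O(\cdot)$. You have in fact supplied slightly more detail (explicitly checking invariance of the sphere measure, existence of $A$, etc.) than the paper, which just cites the unitary proof.
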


The proof is essentially the same as the proof for
Proposition~\ref{prop:HaarDecomp}. If we choose for $M$ a symmetry
transforming $e_1$ in a uniformly chosen vector of
$\S_\RR^{(n+1)}$, we transform a random element of $SO(n)$ into an
element of $O(n+1)$ with determinant $-1$, and reciprocally. As a
consequence, the following result, analogous of
Proposition~\ref{prop:Decomp1}, can easily be shown.

\begin{cor}\label{cor:OrthogCharPoly}
Let $SO\in SO(2n)$ be distributed with the Haar measure
$\mu_{SO(2n)}$. Then
$$
\det (I_{2n}-SO)\law 2 \prod_{k=2}^{2n}
\left(1+\epsilon_k\sqrt{\beta_{\frac{1}{2},\frac{k-1}{2}}}\right),
$$
with
$\epsilon_1,\dots,\epsilon_{2n},\beta_{1/2,1/2},\dots,\beta_{1/2,(2n-1)/2}$
independent random variables such that
$\Prob(\epsilon_k=1)=\Prob(\epsilon_k=-1)=1/2$, and the $\beta$'s
being beta distributed with the indicated parameters.
\end{cor}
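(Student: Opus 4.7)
The plan is to mirror the proof of Proposition~\ref{prop:Decomp1} using the orthogonal Haar-decomposition stated immediately above the corollary. I choose for $M\in O(N+1)$ the Householder reflection sending $e_1$ to a vector $M_1$ drawn uniformly on $\S_\RR^{N+1}$. Nothing in the multilinear manipulation of determinants carried out in the proof of Proposition~\ref{prop:Decomp1} relies on the complex structure, so it transfers verbatim to the real setting and yields
$$
\det(I_{N+1}-O_{N+1})\law \bigl(1-M_{11}^{(N+1)}\bigr)\,\det(I_N-O_N),
$$
with $O_N$ Haar-distributed on $O(N)$, independent of $M_1$, and $M_{11}^{(N+1)}$ the first coordinate of $M_1$.

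Iterating this identity from dimension $2n$ down to $O(1)=\{\pm 1\}$ produces
$$
\det(I_{2n}-O_{2n})\law (1-V_1)\prod_{k=2}^{2n}\bigl(1-M_{11}^{(k)}\bigr),
$$
with $V_1$ uniform on $\{\pm 1\}$, independent of the $M_{11}^{(k)}$, and $M_{11}^{(k)}$ the first coordinate of a vector uniform on $\S_\RR^{k}$. Each Householder reflection has determinant $-1$, so $\det(O_{2n})=(-1)^{2n-1}V_1=-V_1$. Conditioning on $\det(O_{2n})=+1$, which turns Haar measure on $O(2n)$ into Haar measure on $SO(2n)$, amounts to fixing $V_1=-1$, an event independent of every $M_{11}^{(k)}$. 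The terminal factor collapses to $2$, yielding
$$
\det(I_{2n}-SO_{2n})\law 2\prod_{k=2}^{2n}\bigl(1-M_{11}^{(k)}\bigr).
$$

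It remains to identify the marginal law of $M_{11}^{(k)}$. Representing the uniform vector on $\S_\RR^{k}$ as the normalization of a standard Gaussian vector in $\RR^{k}$, its squared first coordinate has a $\beta_{1/2,(k-1)/2}$ distribution while the sign is a symmetric Rademacher variable independent of the magnitude, so $M_{11}^{(k)}\law \epsilon_k\sqrt{\beta_{1/2,(k-1)/2}}$. Replacing each $\epsilon_k$ by $-\epsilon_k$, which does not alter its distribution, turns $1-M_{11}^{(k)}$ into $1+\epsilon_k\sqrt{\beta_{1/2,(k-1)/2}}$ and produces the announced identity. The only subtlety is verifying that conditioning on $\det(O_{2n})=+1$ does not perturb the distribution of the Householder data; this is immediate because each reflection carries a deterministic determinant $-1$, so $\det(O_{2n})$ is a function of $V_1$ alone, and the factor $2$ emerges naturally from the $O(1)$ boundary.
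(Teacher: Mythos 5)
Your proof is correct and matches the approach the paper sketches in the remark preceding the corollary: use the orthogonal analogue of Proposition~\ref{prop:HaarDecomp} with a Householder reflection, iterate the resulting identity $\det(I_{N+1}-O_{N+1})\law(1-M_{11}^{(N+1)})\det(I_N-O_N)$, exploit that each reflection has determinant $-1$ so that the parity of the determinant is carried entirely by the $O(1)$ seed, and identify the law of the first coordinate of a uniform point on $\S_\RR^k$ as $\epsilon_k\sqrt{\beta_{1/2,(k-1)/2}}$. The only cosmetic difference is that you run the recursion from Haar on $O(1)=\{\pm1\}$ and then condition on $\det(O_{2n})=+1$, whereas the paper's remark effectively starts directly from the coset $\{-1\}$ (which immediately yields the factor $1-(-1)=2$); these are the same argument.
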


\begin{rem}
A direct calculation with the suitable change of variables shows
that
$$1+\epsilon_k\sqrt{\beta_{\frac{1}{2},\frac{k-1}{2}}}\law 2\beta_{\frac{k-1}{2},\frac{k-1}{2}},$$
from which formula~\eqref{eq:7.2} can be easily recovered.
\end{rem}

\begin{rem}
The same reasoning can be applied to many other groups such as
$\mathbb{H}(n)$, the set of $n\times n$ matrices $H_n$ on the
field of quaternions with $\overline{H_n}^TH_n=I_n$. The
symplectic group requires some additional work, to appear in a
future paper.
\end{rem}

\renewcommand{\refname}{References}

\end{document}